\newtheorem{theorem}{Theorem}[section]
\newtheorem{lemma}[theorem]{Lemma}
\newtheorem{question}[theorem]{Question}
\theoremstyle{definition}
\newtheorem{definition}[theorem]{Definition}
\theoremstyle{remark}
\newtheorem{remark}[theorem]{Remark}
\numberwithin{equation}{section}
\begin{document}

\setcounter{page}{1}

\title[On the universal function]{On the universal function for weighted spaces $L^{p}_\mu[0,1],\ p\geq 1$}

\author[M. Grigoryan, T. Grigoryan \MakeLowercase{and} A. Sargsyan]{Martin Grigoryan$^1$, Tigran Grigoryan$^1$ \MakeLowercase{and} Artsrun Sargsyan$^2$$^{*}$}

\address{$^{1}$Department of Physics, Yerevan State University, A. Manoogian 1, 0025 Yerevan, Armenia.}
\email{\textcolor[rgb]{0.00,0.00,0.84}{gmarting@ysu.am;
t.grigoryan@ysu.am}}

\address{$^{2}$CANDLE SRI, Acharyan 31, 0040 Yerevan, Armenia; 
\newline
Russian-Armenian (Slavonic) University, H. Emin 123, 0051 Yerevan, Armenia.}
\email{\textcolor[rgb]{0.00,0.00,0.84}{asargsyan@ysu.am}}



\subjclass[2010]{Primary 42C10; Secondary 43A15.}

\keywords{universal function, Fourier coefficients, Walsh system, weighted spaces, convergence in metric.}

\date{Received: xxxxxx; Revised: yyyyyy; Accepted: zzzzzz.
\newline \indent $^{*}$Corresponding author}

\begin{abstract}
In the paper it is shown that there exist a function $g\in L^1[0,1]$ and a weight function $0<\mu(x)\leq1$, so that $g$ is universal for each classes $L^p_\mu[0,1],\ p\geq 1$ with respect to signs--subseries of its Fourier--Walsh series.
\end{abstract} \maketitle

\section{Introduction and preliminaries}
Let $|E|$ be the Lebesgue measure of a measurable set $E\subseteq [0,1]$, $\chi_E(x)$ -- its characteristic function, $L^{p}(E)\ (p>0)$ -- the class of all those measurable functions on $E$ that satisfy the condition $\int_E|f(x)|^pdx<+\infty$, $L^p_\mu[0,1]$ (weighted space) -- the class of all those measurable functions on $[0,1]$ that satisfy the condition $\int_0^1|f(x)|^p\mu (x)dx<+\infty$, where $0<\mu(x)\leq1$ is a weight function, and $\{\varphi_k\}$ -- a complete orthonormal system in $L^2[0,1]$.

\begin{definition}
Let $0<\mu(x)\leq1,$ be a measurable on $[0,1]$ function. We say that a function $g\in L^1[0, 1]$  is universal for a class $L^p_\mu [0,1]$ with respect to signs--subseries of its Fourier series by the system $\{\varphi_k\}$, if for each function $f\in L^p_\mu [0,1]$ one can choose numbers $\delta_{k}=\pm1, 0$ so that the series 
\begin{equation*}
\sum_{k=0}^{\infty}\delta_{k}c_{k}(g)\varphi_{k}(x),\quad \hbox{with}\quad  c_{k}(g)=\int_{0}^{1}g(x)\varphi_{k}(x)dx,
\end{equation*}
converges to $f$ in $L^p_\mu [0,1]$ metric, i.e.
\begin{equation*}
\lim_{m\to \infty}\int_0^1\left|\sum_{k=0}^m\delta_{k}c_k(g)\varphi_k(x)-f(x)\right|^p\mu (x)dx=0.
\end{equation*}
\end{definition}

Let us recall the definition of the Walsh orthonormal system $\{W_n(x)\}_{n=0}^\infty$. Functions of the Walsh system are defined by means of Rademacher's functions \begin{equation*}
R_n(x)=\hbox{sign}(\sin2^n\pi x ), \quad x\in [0,1], \quad    n=1,2,\dots ,
\end{equation*}
in the following way (see \cite{Gol1987}): $W_0(x)\equiv 1$ and for $n\ge 1$
\begin{equation*}
 W_n(x)=\prod_{i=1}^p R_{k_i+1}(x),
\end{equation*}
where $n=2^{k_1}+2^{k_2}+\dots+2^{k_p}\quad (k_1>k_2>\dots>k_p).$ 

In the present paper the following theorem is proved for the Walsh system:

\begin{theorem}\label{main}
There exist a function $g\in L^1[0,1]$ and a weight function $0<\mu(x)\leq1$, so that $g$ is universal for each class $L^p_\mu[0,1],\ p\geq 1$ with respect to signs--subseries of its Fourier--Walsh series.
\end{theorem}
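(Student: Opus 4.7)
The plan is to construct $g$ and $\mu$ simultaneously by an inductive blockwise procedure, in the spirit of the Men'shov--Talalyan universality program. First I would fix a countable collection $\{f_m\}_{m\geq 1}$ that is dense in $L^1[0,1]$ and consists of dyadic step functions with rational values; because $\mu\leq 1$ one has $L^p_\mu[0,1]\subset L^1_\mu[0,1]\subset L^1[0,1]$ for every $p\geq 1$, so this family will also be dense in each $L^p_\mu[0,1]$ regardless of the eventual choice of $\mu$.

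The core of the argument is a block approximation lemma that I would establish first: for every dyadic step function $f$, every $\varepsilon>0$ and every integer $N_0$, one can find $N_1>N_0$, real numbers $\{a_k\}_{k=N_0}^{N_1}$, signs $\{\delta_k\}_{k=N_0}^{N_1}\subset\{-1,0,1\}$ and a measurable set $E\subset[0,1]$ with $|E|<\varepsilon$, such that the Walsh polynomial $\sum_{k=N_0}^{N_1}\delta_k a_k W_k(x)$ coincides with $f(x)$ on $[0,1]\setminus E$ and the partial sums $\sum_{k=N_0}^{M}\delta_k a_k W_k$ are controlled in $L^1$ in terms of $\|f\|_1$. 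The construction exploits the fact that characteristic functions of rank-$n$ dyadic intervals are Walsh polynomials in $W_0,\dots,W_{2^n-1}$; multiplying by a suitable Rademacher factor $R_{n+1}$ shifts the spectrum above any prescribed $N_0$ and produces the signs $\delta_k$ on demand. Iterating this lemma with tolerance $\varepsilon_m=2^{-m}$ on the family $\{f_m\}$ yields disjoint consecutive index blocks $\Delta_m=[N_{m-1}+1,N_m]$, polynomials $H_m$ spectrally supported in $\Delta_m$, signs $\delta^{(m)}$, and exceptional sets $E_m$ with $|E_m|<2^{-m}$, such that $\sum_{k\in\Delta_m}\delta^{(m)}_k a_k W_k=f_m$ on $[0,1]\setminus E_m$.

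Setting $g(x)=\sum_m \lambda_m^{-1} H_m(x)$ with $\lambda_m\to\infty$ fast enough that $\sum_m\lambda_m^{-1}\|H_m\|_1<\infty$ gives $g\in L^1[0,1]$, and, because the blocks $\Delta_m$ are disjoint, $c_k(g)=a_k/\lambda_m$ for $k\in\Delta_m$; applying the lemma to $\lambda_m f_m$ in place of $f_m$ absorbs the scaling so that the signed subseries on $\Delta_m$ reproduce $f_m$ off $E_m$. The weight is defined by $\mu(x)=2^{-m}(1+M_m)^{-m}$ for $x\in E_m\setminus\bigcup_{j>m}E_j$, with $M_m=\max_x|H_m(x)|$, and $\mu(x)=1$ elsewhere; this guarantees $\sum_m\int_{E_m}|H_m|^p\mu\,dx<\infty$ for every $p\geq 1$, so errors from the exceptional sets are negligible in the $L^p_\mu$ metric. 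For an arbitrary $f\in L^p_\mu[0,1]$, one approximates $f$ successively by $f_{m_1},f_{m_2},\ldots$ from the dense family with $\|f-\sum_{j\leq J}f_{m_j}\|_{L^p_\mu}\to 0$, and concatenates the corresponding signed subseries from $\Delta_{m_1},\Delta_{m_2},\ldots$ (with $\delta_k=0$ on all other indices); the resulting signed subseries converges to $f$ in $L^p_\mu[0,1]$. The principal obstacle is the block approximation lemma: simultaneously forcing exact representation off a prescribed small set, uniform $L^1$ control of partial sums, and spectrum confined to an arbitrary tail $[N_0,N_1]$ requires a careful combinatorial design of the coefficients and signs; once that lemma is in place, the assembly of $g$ and $\mu$ is a bookkeeping exercise driven by the rapid decay of $\varepsilon_m$, $\lambda_m^{-1}$, and $\mu$ on $E_m$.
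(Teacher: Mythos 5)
Your overall architecture (a blockwise construction over a countable dense family of dyadic step functions, a function $g$ assembled from spectrally disjoint blocks, a weight $\mu$ made small on the exceptional sets, and a final concatenation of blocks chosen greedily for a given $f$) is the same as the paper's. But the core of your argument --- the ``block approximation lemma'' --- is exactly the hard part, and the one concrete mechanism you offer for it does not produce what you claim. Writing $\chi_\Delta$ as a Walsh polynomial in $W_0,\dots,W_{2^K-1}$ and multiplying by a Rademacher factor (equivalently by $W_{2^M}$) does shift the spectrum into a prescribed tail block, but the resulting polynomial equals $+l$ on one half of $\Delta$ and $-l$ on the other half: the set where it fails to coincide with $l\chi_\Delta$ has measure $\tfrac{1}{2}|\Delta|$, not measure less than $\varepsilon$. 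That is precisely the content of the paper's Lemma \ref{lem1}, and the entire point of Lemma \ref{lem2} is the iterative correction needed to shrink the bad set: on the bad half one adds a second such polynomial taking values $\pm 2l$, so the sum equals $l$ on three quarters of $\Delta$ and $-3l$ on the remaining quarter, and after $q$ iterations it equals $l$ off a set of measure $2^{-q}|\Delta|$ on which it equals $-(2^q-1)l$. This geometric blow-up on the shrinking exceptional set is what forces the weight $\mu$ into the theorem in the first place, and it produces the partial-sum bound $2^qC|l||\Delta|^{1/p}$, which is harmless only because the dyadic pieces $\Delta_j$ of $f$ are first subdivided finely enough that $2^qC|l_j||\Delta_j|^{1/p}<\varepsilon/2$ (condition \eqref{26}). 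None of this appears in your sketch; you yourself flag the lemma as ``the principal obstacle,'' but you do not supply the idea that overcomes it, so the construction does not get off the ground.

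There is a second, independent gap: you only require the intermediate partial sums $\sum_{k=N_0}^{M}\delta_k a_k W_k$ to be controlled in $L^1$. Convergence of the signed subseries to $f$ in $L^p_\mu[0,1]$ for $p>1$ requires control of these partial sums in $L^p_\mu$ itself --- this is statement 3) of Lemmas \ref{lem3} and \ref{lem4}, proved for all $p\in[1,p_m]$ with $p_m\nearrow+\infty$ using the uniform boundedness \eqref{3} of the Walsh partial-sum operators in $L^p$, $p>1$, together with the smallness of $\|f_{\nu_q}\|_{L^p_\mu}$ at stage $q$. An $L^1$ bound says nothing about the higher $L^p_\mu$ norms on the good set, where $\mu=1$. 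Without such bounds you can only conclude that the subsequence of partial sums taken at block endpoints converges to $f$, whereas the definition of universality used here demands convergence of the full sequence of partial sums.
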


Moreover, it will be shown that the measure of the set on which $\mu(x)=1$ can be made arbitrarily close to 1, and
the function $g\in L^1[0,1]$ can be choosen to have strictly decreasing Fourier--Walsh coefficients and converging to it by $L^1[0,1]$ norm Fourier--Walsh series.

\begin{remark}
In the proved theorem the weight function $\mu (x)$ cannot be made equal to 1 everywhere in $[0,1]$. Moreover, there does not exist a universal function $g\in L^1[0,1]$ (defined above) for any class $L^p[0,1]$, $p\geq 1$.
\end{remark}

It can be easily shown that the assumption of existence of such universal function simply leads to contradiction. Indeed, if that assumption was true, then for the function $k_0c_{k_0}(g)W_{k_0}(x)$, where $k_0>1$ is any natural number with condition $c_{k_0}(g)\neq 0$, one could find numbers $\delta_{k}=\pm1, 0$ so that

\begin{equation*}
\lim_{m\to \infty}\int_0^1\left|\sum_{k=0}^m\delta_kc_k(g)W_k(x) - k_0c_{k_0}(g)W_{k_0}(x)\right|^pdx=0.
\end{equation*}
Hence, we would simply get a contradiction: $\delta_{k_0}=k_0>1$. 

Existences of functions, which are universal in different senses, were considered by mathematicians since the beginning of the 20-th century. The first type of universal function was considered by G. Birkhoff \cite{Bir1929} in 1929. He proved, that there exists an entire function $g(z)$, which is universal with respect to translations, i.e. for every entire function $f(z)$ and for each number $r>0$ there exists a growing sequence of natural numbers $\{n_k\}_{k=1}^\infty$, so that the sequence $\{g(z+n_k)\}_{k=1}^\infty$ uniformly converges to $f(z)$ on $|z|\leq r$. In 1952 G. MacLane \cite{Mac1952} proved a similar result for another type of universality, namely, there exists an entire function $g(z)$, which is universal with respect to derivatives, i.e. for every entire function $f(z)$ and for each number $r>0$ there exists a growing sequence of natural numbers $\{n_k\}_{k=1}^\infty$, so that the sequence $\{g^{(n_k)}(z)\}_{k=1}^\infty$ uniformly converges to $f(z)$ on $|z|\leq r$. Further, in 1975 S. Voronin \cite{Vor1975} proved the universality theorem for the Riemann zeta function $\zeta(s)$, which states that any nonvanishing analytic function can be approximated uniformly by certain purely imaginary shifts of the zeta function in the critical strip, namely, if $0<r<\frac{1}{4}$ and $g(s)$ is a nonvanishing continuous function on the disk $|s|\leq r$, that is analytic in the interior, then for any $\varepsilon>0$, there exists such a positive real number $\tau$ that
\begin{equation*}
\max_{|s|\leq r}\big|g(s)-\zeta(s+3/4+i\tau)\big|<\varepsilon .
\end{equation*}

In 1987 K. Grosse--Erdman \cite{Gros1987} proved the existence of infinitely differentiable function with universal Taylor expansion, namely, there exists a function $g\in C^{\infty}(\mathcal{R})$ with $g(0)=0$, such that for every function $f\in C(\mathcal{R})$ with $f(0)=0$ and for each number $r>0$ there exists a growing sequence of natural numbers $\{n_k\}_{k=1}^\infty$, so that the sequence
\begin{equation*}
S_{n_k}(g,0)=\sum_{m=1}^{n_k} {g^{(m)}(0)\over m!}x^{m}
\end{equation*}
uniformly converges to $f(x)$ on $|x|\leq r$.

In papers \cite{GrigSar2016} and \cite{GrigSar2016a} authores studied existances of universal functions for classes $L^p[0,1],\ p\in(0,1)$ with respect to signs--subseries of Fourier--Walsh series and signs of Fourier--Walsh coefficients, respectively. In particular, it was shown in \cite{GrigSar2016} that for each number $p\in (0,1)$ one can construct a function from $L^{1}[0,1]$ with convergent in $L^{1}[0,1]$ Fourier--Walsh series having decreasing coefficients, which is universal for the class $L^{p}[0,1]$with respect to signs--subseries of Fourier--Walsh series. 

Note that the definition of function universality which we gave above could be done in therms of Fourier series universality in corresponding sense. The topic of universal series existance (in the common sense, with respect to rearrangements, partial series, signs of coefficients and etc.) in various classical orthogonal systems was also invevestigated intensively. The most general results were obtained by D. Menshov \cite{Men1964}, A. Talalyan \cite{Tal1960}, P. Ulyanov \cite{Ul1972} and their disciples (see \cite{Ol1968}--\cite{Epis2006}). 
    
Regarding to the result of the present paper the following questions arise, the answer to which is unknown yet:

\begin{question}
Is the theorem \ref{main} true for other orthonormal systems (trigonometric system, Franklin system and etc.)?
\end{question}

\begin{question}
Is it possible to acheive universality with respect to signs of Fourier--Walsh coefficients  (i.e. exclude $0$ values from the sequence $\delta_k$) in theorem \ref{main}?
\end{question}

\section{Main lemmas}

Let us start from known properties of the Walsh system, which will be used during the proofs. It is known (see \cite{Gol1987}) that for each natural number $m$

\begin{equation}
\sum_{k=0}^{2^m-1}W_k(x)=
 \begin{cases}
 2^m, &\hbox{when} \quad  x\in[0, 2^{-m}),
 \\
  0, &\hbox{when} \quad  x\in (2^{-m}, 1],
 \end{cases}\label{1.a}
\end{equation}
 and, consequently,

\begin{equation*}
\sum_{k=2^{m}}^{2^{m+1}-1}W_k(x)=
 \begin{cases}
 2^m, &\hbox{when} \quad  x\in [0, 2^{-m-1}),
 \\
 -2^m, &\hbox{when} \quad  x\in(2^{-m-1}, 2^{-m}),
 \\
 0, &\hbox{when} \quad  x\in (2^{-m}, 1],
 \end{cases}
\end{equation*}
thus, for each $p>0$ we have
\begin{equation}
\int_0^1\left|\sum_{k=2^{m}}^{2^{m+1}-1}W_k(x)\right|^pdx=2^{m(p-1)}.\label{1.b}
\end{equation}

Let 
\begin{equation*}
\|\cdot\|_{L^p(E)}=\left(\int_E|\cdot|^pdx\right)^{\frac{1}{p}}\quad \hbox{and} \quad \|\cdot\|_{L^p_\mu [0,1]}=\left(\int_0^1|\cdot|^p\mu (x)dx\right)^{\frac{1}{p}},
\end{equation*}
where $p\geq 1$, $E\subseteq [0,1]$ and $0<\mu(x)\leq 1$, be the norms of spaces $L^p(E)$ and $L^p_\mu[0,1]$, respectively.
Obviously, for any natural number $M\in[2^m,2^{m+1})$ and numbers $\left\{a_k\right\}^{2^{m+1}-1}_{k=2^m}$
\begin{equation}
\left\|\sum_{k=2^{m}}^M a_k W_k\right\|_{L^1[0,1]}\leq\left\| \sum_{k=2^{m}}^{2^{m+1}-1} a_k W_k\right\|_{L^2[0,1]}.\label{2}
\end{equation}

Note also that the basicity of the Walsh system in spaces $L^p[0,1],\ p> 1$ provides the existence of a constant $C_p>0$, so that for each function $f\in L^p[0,1]$ the following inequality holds:
\begin{equation}
\|S_k(f)\|_{L^p[0,1]}\leq C_p\|f\|_{L^p[0,1]},\quad \forall k\in\mathbb{N},\label{3}
\end{equation}
where $\{S_k(f)\}$ are partial sums of its expansion by the Walsh system \cite{Gol1987}.

In the paper we use the following lemma, which was proved in \cite{Nav1994}:

\begin{lemma}\label{lem1}
For each dyadic interval $\Delta=\left[\frac{i}{2^K},\frac{i+1}{2^K}\right]$, $0\leq i<2^K$, and for every natural number $M>K,$ such that $\frac{M-K}{2}$ is a whole number, there exists a polynomial in the Walsh system
\begin{equation*}
H(x)=\sum_{k=2^M}^{2^{M+1}-1}a_kW_k(x),
\end{equation*}
so that

1) $|a_k|=2^{-\frac{M+K}{2}},\quad$ when  $\quad 2^M\leq k<2^{M+1}$,

2) $H(x)=-1,\quad$ if $\quad x\in E_1,\ |E_1|=\frac{1}{2}|\Delta|$,

3) $H(x)=1,\quad$ if $\quad x\in E_2,\ |E_2|=\frac{1}{2}|\Delta|$,

4) $H(x)=0,\quad$ if $\quad x\not\in \Delta$.\newline
where $E_1$ and $E_2$ are finite unions of dyadic intervals.
\end{lemma}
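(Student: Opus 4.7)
The plan is to construct $H$ as a sign-modulated indicator supported on $\Delta$, and then to select the sign pattern so that, via the multiplicative structure of the Walsh system, every nonzero Fourier--Walsh coefficient has one and the same modulus. I would partition $\Delta=[i/2^K,(i+1)/2^K]$ into $N:=2^{M-K}$ sub-cells $I_j$ of width $2^{-M}$, each further split into two halves of width $2^{-M-1}$. For a sign vector $(\varepsilon_j)_{j=0}^{N-1}\in\{\pm1\}^N$ to be specified later, define
\[
H(x)=\varepsilon_j\,R_{M+1}(x)\ \ (x\in I_j),\qquad H(x)=0\ \ (x\notin\Delta).
\]
Since $|R_{M+1}|=1$ and $R_{M+1}$ flips sign at the midpoint of every $I_j$, properties (2)--(4) are immediate, with $E_1=\{H=-1\}$ and $E_2=\{H=+1\}$ finite unions of dyadic intervals of length $2^{-M-1}$ whose total measures are each $|\Delta|/2$.

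Next I would compute the Walsh expansion of $H$. Using $R_{M+1}=W_{2^M}$ and the multiplication rule $W_{2^M}W_k=W_{2^M+k}$ for $k<2^M$, one has $H=W_{2^M}\cdot f$ with $f=\sum_j\varepsilon_j\chi_{I_j}$, hence
\[
H=\sum_{k=0}^{2^M-1}\widehat f(k)\,W_{2^M+k},\qquad \widehat f(k)=2^{-M}\sum_{j=0}^{N-1}\varepsilon_j W_k(c_j),
\]
for any choice $c_j\in I_j$. This places the spectrum of $H$ inside $[2^M,2^{M+1})$ automatically; what remains is to arrange $|\widehat f(k)|=2^{-(M+K)/2}$ for every $k<2^M$. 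Now $W_{k''}$ is constant on $\Delta$ whenever $k''<2^K$, while the restrictions of $\{W_{2^Kk'}\}_{k'<2^{M-K}}$ to the cells $\{I_j\}$ realise \emph{all} characters of $\mathbb{F}_2^{M-K}$, so writing $k=2^Kk'+k''$ the requirement collapses to
\[
\Bigl|\sum_{j=0}^{N-1}\varepsilon_j\chi(j)\Bigr|=\sqrt N=2^{(M-K)/2}\qquad\text{for every character }\chi\text{ of }\mathbb{F}_2^{M-K}.
\]

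The decisive step is therefore the choice of $(\varepsilon_j)$: this uniform-modulus condition is precisely the definition of a \emph{bent function} on $\mathbb{F}_2^{M-K}$, and such functions exist if and only if the dimension $M-K$ is even, i.e.\ exactly when $(M-K)/2\in\mathbb{N}$. An explicit choice is the Maiorana--McFarland inner-product bent function $\varepsilon_j=(-1)^{j_1j_{L+1}+\cdots+j_Lj_{2L}}$ with $L=(M-K)/2$ and $j=\sum_{l=1}^{2L}j_l\,2^{l-1}$, for which a direct character-sum calculation gives $|\widehat\varepsilon(\chi)|=2^L$ at every $\chi$. With this choice $|\widehat f(k)|=2^{-M}\cdot 2^{(M-K)/2}=2^{-(M+K)/2}$, which is property (1). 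The main obstacle is locating this bent-function reduction: the flatness requirement (1) is very rigid and would be hard to meet by ad hoc sign choices, but once recognised it transparently explains why the parity hypothesis on $M-K$ is both necessary and sufficient. The remaining verifications of (1)--(4) are then routine using the Walsh-sum identities \eqref{1.a}--\eqref{1.b}.
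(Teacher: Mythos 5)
You have given a complete and correct proof, but note that the paper itself does not prove Lemma~\ref{lem1} at all --- it is imported verbatim from \cite{Nav1994} --- so there is no in-paper argument to compare against, and your write-up must be judged on its own terms. Judged so, it is sound: the factorization $H=W_{2^M}\cdot f$ with $f=\sum_j\varepsilon_j\chi_{I_j}$ and the rule $W_{2^M}W_k=W_{2^M+k}$ for $k<2^M$ correctly place the spectrum in $[2^M,2^{M+1})$; the splitting $k=2^Kk'+k''$ correctly isolates the part of $W_k$ that varies over the $2^{M-K}$ cells of $\Delta$ (the factor $W_{k''}$, $k''<2^K$, is constant on $\Delta$ and contributes only a sign); and the resulting flatness requirement $\bigl|\sum_j\varepsilon_j\chi(j)\bigr|=2^{(M-K)/2}$ for every character $\chi$ of $\mathbb{F}_2^{M-K}$ is exactly bentness, which the Maiorana--McFarland function supplies precisely when $M-K$ is even. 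The Parseval check $2^M\cdot 2^{-(M+K)}=2^{-K}=|\Delta|$ confirms the normalization, and your remark that the coefficients of a $\{0,\pm1\}$-valued step function with step $2^{-M-1}$ are dyadic rationals shows the parity hypothesis is genuinely necessary, not an artifact. The classical proofs of such flat-spectrum lemmas (Navasardyan's included) are usually phrased as an explicit recursive or tensor-product sign pattern on the cells, iterating a basic $4$-cell block as $M-K$ increases by $2$; since the inner-product bent function is exactly the tensor power of that block, your argument is an equivalent but more conceptual packaging of the standard construction, with the added benefit of making the role of the hypothesis $\frac{M-K}{2}\in\mathbb{N}$ transparent. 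One cosmetic point: $R_{M+1}$ vanishes at dyadic points, so $E_1\cup E_2$ omits a finite set of points; this is harmless because the lemma only asserts the measures of $E_1$ and $E_2$ and that they are finite unions of dyadic intervals (take the intervals half-open).
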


One of the main building blocks in the proof of the theorem \ref{main} is Lemma \ref{lem3} which is proved by the help of Lemma \ref{lem2}.

\begin{lemma}\label{lem2}
Let $p>1$, $n_0$ be some natural number and $\Delta\subset\left[0,1\right]$ be a dyadic interval, then for any numbers $0<\varepsilon<1$, $l\neq 0$ and natural number $q $ there exist a measurable set $E_q\subset\Delta$  with measure $|E_q| =(1-2^{-q})|\Delta|$ and polynomials 
\begin{equation*}
P_q(x)=\sum_{k=2^{n_0}}^{{2^{n_q}}-1}a_kW_k(x)\quad\hbox{and}\quad
H_q(x)=\sum_{k=2^{n_0}}^{{2^{n_q}}-1}\delta_ka_kW_k(x),\quad \delta_{k}=\pm1,0,
\end{equation*}
in the Walsh system, so that $H_q(x) = 0 $ outside $\Delta$,
\begin{equation*}
0<a_{k+1}\leq a_k< \varepsilon \quad \hbox{when} \quad k\in[2^{n_0}, 2^{n_q}-1),\leqno1)
\end{equation*} 
\begin{equation*}
\|l\chi_\Delta-H_q\|_{L^p(E_q)}=0,\leqno2)
\end{equation*}
\begin{equation*}
\max_{2^{n_0}\leq M < 2^{n_q}}\left\|\sum_{k=2^{n_0}}^M \delta_ka_k W_k\right\|_{L^p[0,1]}<2^qC|l||\Delta|^{\frac{1}{p}},\leqno3)
\end{equation*}
where $C$ is a constant defined by the space $L^p[0,1]$, and
\begin{equation*}
\max_{2^{n_0}\leq M < 2^{n_q}}\left\| \sum_{k=2^{n_0}}^Ma_k W_k\right\|_{L^1[0,1]}<\varepsilon.\leqno4)
\end{equation*}
\end{lemma}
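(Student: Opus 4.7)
The plan is to iterate Lemma~\ref{lem1} $q$ times, at each step using it to approximate the constant $l$ on half of the current ``error set.'' Set $E_1^{(0)}:=\Delta$ and inductively construct a Walsh polynomial $H^{(j)}$ supported in $E_1^{(j-1)}$ (a finite union of dyadic intervals of total measure $|\Delta|/2^{j-1}$) which takes the value $+2^{j-1}l$ on a subset $E_2^{(j)}$ of half its measure and $-2^{j-1}l$ on the complementary half $E_1^{(j)}$; this is done by refining $E_1^{(j-1)}$ into dyadic components of a common length $2^{-K_j}$ and applying Lemma~\ref{lem1} separately to each, then scaling by $2^{j-1}l$. A telescoping calculation shows that $H_q:=H^{(1)}+\cdots+H^{(q)}$ equals $l$ on $E_q:=E_2^{(1)}\cup\cdots\cup E_2^{(q)}$, which has measure $(1-2^{-q})|\Delta|$; this gives property~2).

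To prepare for the other three properties, I would allocate a separate Walsh index block $[2^{M},2^{M+1})$ to every invocation of Lemma~\ref{lem1} and take these $M$'s consecutively starting from $n_0$, so that the resulting sub-blocks tile $[2^{n_0},2^{n_q})$ without gaps. Inside each such sub-block Lemma~\ref{lem1} produces coefficients of a single common magnitude $c=|l|\,2^{j-1}\,2^{-(M+K_j)/2}$; ordering the sub-blocks by decreasing $c$ yields a non-increasing positive sequence $\{a_k\}$, with property~1) secured by taking the initial $M$ large enough that even the first $c$ is less than $\varepsilon$. I then write $H_q=\sum_k\delta_k a_k W_k$ with $\delta_k\in\{-1,0,+1\}$ encoding the sign of the (uniquely defined, since the sub-blocks are disjoint) Walsh coefficient of $H_q$ at index $k$, and $P_q=\sum_k a_k W_k$.

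Property~3) follows from decomposing any partial sum as $S_{j-1}+(\text{tail within the current sub-block})$: the pointwise bound $|S_{j-1}|\le 2^{j-1}|l|$ on $\Delta$ gives $\|S_{j-1}\|_{L^p[0,1]}\le 2^{j-1}|l|\,|\Delta|^{1/p}$, and the tail is bounded by $C_p\|H^{(j)}\|_{L^p[0,1]}$ via the basicity inequality~\eqref{3}, with $\|H^{(j)}\|_{L^p[0,1]}$ computed directly from the $\pm 2^{j-1}l$ values of $H^{(j)}$ on a set of measure $|\Delta|/2^{j-1}$. For property~4), inequality~\eqref{2} bounds the $L^1$-norm of any partial sum lying in a single sub-block by the $L^2$-norm of that full sub-block, namely $c\sqrt{2^{M}}=|l|\,2^{j-1}\,2^{-K_j/2}$, which I make smaller than $\varepsilon/2$ by choosing $K_j$ large; the accumulated $L^1$-mass of the previously completed sub-blocks equals the sum of their $c$'s, which I keep below $\varepsilon/2$ by choosing the $M$'s (hence $c$'s) small. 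The main obstacle, and the most technical step, is the simultaneous combinatorial bookkeeping: arranging the $M$'s and $K_j$'s so that the sub-blocks tile $[2^{n_0},2^{n_q})$, the magnitudes $a_k$ are non-increasing across the entire range, each application of Lemma~\ref{lem1} meets its parity constraint $(M-K_j)\in 2\mathbb{Z}$, and all four required bounds hold under a single consistent choice of parameters.
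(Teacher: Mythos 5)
Your overall strategy is the paper's: iterate Lemma~\ref{lem1} stage by stage, at stage $j$ adding a polynomial worth $\pm 2^{j-1}l$ on the two halves of the current error set so that the sum telescopes to $l$ on a set of measure $(1-2^{-q})|\Delta|$, and then prove 3) by ``completed stages $+$ current block'' with the basicity inequality \eqref{3}, and 4) by the $L^1$-mass of completed blocks plus the $L^2$-bound \eqref{2} on the current one. All of that is sound and matches the paper's proof.

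The genuine gap is in the index allocation, which you defer as ``bookkeeping'' but which cannot be carried out as you describe. You propose that the active blocks $[2^M,2^{M+1})$ tile $[2^{n_0},2^{n_q})$ consecutively. This fails for three structural reasons. First, within one stage all components share the same dyadic level $K_j$, and Lemma~\ref{lem1} demands $M\equiv K_j \pmod 2$; consecutive integers $M$ alternate parity, so they cannot all serve the same $K_j$. Second, Lemma~\ref{lem1} requires $M>K_j$, while $K_1$ must already be large (to get the first coefficient $|l|2^{-(M+K_1)/2}<\varepsilon$ and the $L^2$-tail $2^{j-1}|l|2^{-K_j/2}<\varepsilon/2$); for a given, possibly small, $n_0$ you cannot start the tiling at $M=n_0$. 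Third, and decisively, the error set produced at stage $j$ is a union of dyadic intervals at scale comparable to the \emph{last Walsh block used at stage $j$} (the sets $E_1,E_2$ of Lemma~\ref{lem1} live at level up to $M+1$), so $K_{j+1}$ must exceed the final $M$ of stage $j$, and hence the first admissible $M$ of stage $j+1$ exceeds $K_{j+1}$ — the stages are forced apart and gaps in the index range are unavoidable. Since the lemma requires $a_k>0$ for \emph{every} $k\in[2^{n_0},2^{n_q})$, you must assign positive coefficients on those gaps; this is exactly the device the paper uses (filler coefficients $a_k$ equal in magnitude to those of the next active block, with $\delta_k=0$ there, cf. \eqref{12} and \eqref{24}), and it is what makes condition~1) compatible with the sparseness of the active blocks. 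The fillers then contribute to the $L^1$-norm in 4), which is why the paper's choice of the $K_i^{(q)}$ carries the factor $K_i^{(q)}-K_{i-1}^{(q)}$ in condition $b^{\prime\prime}$); your accounting of the accumulated $L^1$-mass as ``the sum of the $c$'s of the active blocks'' omits this contribution. Without introducing the filler coefficients your construction does not produce the required pair $(P_q,H_q)$.
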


\begin{proof}
The proof is performed using mathematical induction with respect to the number $q$. Let $\Delta =\left[\frac{i}{2^K},\frac{i+1}{2^K}\right]\subset\left[0,1\right]$. Choosing a natural number $K_1>K$ such that
\begin{equation}
|l|2^{-\frac{K_1+1}{2}}<\frac{\varepsilon}{2},\label{4}
\end{equation}
we present the interval $\Delta$ in the form of union of disjoint dyadic intervals
\begin{equation*}
\Delta=\bigcup_{i=1}^{N_1}\Delta_i^{(1)}
\end{equation*}
with measure $\big|\Delta_i^{(1)}\big|=2^{-K_1-1},\ i=\overline{1,N_1}.$
Obviously, $N_1=2^{K_1-K+1}$.

By denoting $K_{0}^{(1)}\equiv n_0-1$, for each natural number $i\in[1,N_1]$ we choose a natural number $K_i^{(1)}>K_{i-1}^{(1)}\ \bigl(K_1^{(1)}>K_1\bigr)$ such that the following conditions take place: 

\smallskip
a) $\frac{K_i^{(1)}-K_1-1}{2}$ is a whole number, 

\smallskip
b) $(K_i^{(1)}-K_{i-1}^{(1)})|l|2^{-\frac{K_i^{(1)}+K_1+1}{2}}<\frac{\varepsilon}{4N_1},$

\smallskip
c) $2|l|2^{-\frac{K_i^{(1)}+1}{2}}<\frac{\varepsilon}{2}.$

\smallskip
It immediately follows from \eqref{4} that
\begin{equation}
|l|2^{-\frac{K_1^{(1)}+K_1+1}{2}}<\varepsilon.\label{5}
\end{equation}

By successively applying  lemma \ref{lem1} for each interval $\Delta_i^{(1)}\ (i=\overline{1,N_1})$ and corresponding number $K_i^{(1)}$, we can find polynomials in the Walsh system 
\begin{equation}
\overline H_i^{(1)}(x)=\sum_{k=2^{K_i^{(1)}}}^{2^{K_i^{(1)}+1}-1}\bar a_kW_k(x),\quad i=\overline{1,N_1}\label{6}
\end{equation}
such that
\begin{equation}
|\bar a_k| =|l|2^{-\frac{K_i^{(1)} + K_1+1}{2}},\quad \hbox{when}\quad k\in\bigl[2^{K_i^{(1)}}, 2^{K_i^{(1)}+1}\bigr),\label{7}
\end{equation}
\begin{equation}
\overline H_i^{(1)}(x)=
\begin{cases}
-l, & \hbox{for} \quad x\in \widetilde{E_i}^{(1)}\subset \Delta_i^{(1)},\quad \big|\widetilde{E_i}^{(1)}\big|=\frac{1}{2}\big|\Delta_i^{(1)}\big|,
\\
l, & \hbox{for} \quad x\in \widetilde{\widetilde{E_i}}^{(1)}\subset \Delta_i^{(1)},\quad \big|\widetilde{\widetilde{E_i}}^{(1)}\big|=\frac{1}{2}\big|\Delta_i^{(1)}\big|,
\\
0, & \hbox{for} \quad x\notin \Delta_i^{(1)}.
\end{cases}\label{8}
\end{equation}

Hence, by denoting
\begin{equation}
H_1(x)=\sum_{i=1}^{N_1}\overline H_i^{(1)}(x),\label{9}
\end{equation}
we get
\begin{equation}
H_1(x)=
\begin{cases}
-l, & \hbox{for} \quad x\in \widetilde E_1\subset \Delta,\quad  \big|\widetilde E_1\big|= \frac{|\Delta|}{2},
\\
l, & \hbox{for} \quad x\in \Delta\setminus \widetilde E_1,
\\
0, & \hbox{for} \quad x\notin \Delta.
\end{cases}\label{10}
\end{equation}

As the polynomial $\overline H_i^{(1)}(x)$ is a linear combination of Walsh functions from $K_i^{(1)}$ group, it is clear, that the set $\widetilde E_1$ can be presented as a union of certain $N_2$ number of disjoint dyadic intervals
\begin{equation*}
\widetilde E_1=\bigcup_{i=1}^{N_2}\Delta_i^{(2)}
\end{equation*}
with measure $\big|\Delta_i^{(2)}\big|=2^{-K_{N_1}^{(1)} -1},\ i=\overline{1,N_2}.$

By defining
\begin{equation}
E_1 = \Delta\setminus \widetilde E_1\label{11}
\end{equation}
and 
\begin{equation}
\begin{cases}
\bar a_k = |l|2^{-\frac{K_i^{(1)}+K_1+1}{2}},\quad \hbox{when}\quad k\in \bigl[2^{K_{i-1}^{(1)}+1}, 2^{K_i^{(1)}}\bigl),\quad i\in[1,N_1],
\\
\bar\delta_k =
\begin{cases}
0, & \hbox{when} \quad k\in \bigl[2^{K_{i-1}^{(1)}+1}, 2^{K_i^{(1)}}\bigr)
\\
1, & \hbox{when} \quad k\in\bigl[2^{K_i^{(1)}}, 2^{K_i^{(1)}+1}\bigr)
\end{cases},\quad i\in[1,N_1],
\\
a_k=|\bar a_k|,\quad \delta_k=\bar\delta_k \cdot \frac{\bar a_k}{|\bar a_k|},\quad \hbox{when}\quad k\in\bigl[2^{n_0},2^{K_{N_1}^{(1)}+1}\bigr),
\end{cases}\label{12}
\end{equation}
let us verify that the set $E_1$ and polynomials
\begin{equation*}
P_1(x)=\sum_{k=2^{n_0}}^{{2^{K_{N_1}^{(1)}+1}}-1}a_kW_k(x)\quad\hbox{and}\quad
H_1(x)=\sum_{k=2^{n_0}}^{{2^{K_{N_1}^{(1)}+1}}-1}\delta_ka_kW_k(x),\quad \delta_k=\pm1, 0
\end{equation*}
satisfy all statements of lemma \ref{lem2} for $q=1$. Indeed, by using \eqref{10} and \eqref{11} we obtain $| E_1|=(1-2^{-1})|\Delta|$. The statement 1) follows from \eqref{5}, \eqref{7}, \eqref{12} and from monotonicity of numbers $K^{(1)}_i\ (i=\overline{1,N_1})$. The statement 2) immediately follows from \eqref{10} and \eqref{11}. To prove statements 3) and 4) we present the natural number $M\in \bigl[2^{n_0}, 2^{K_{N_1}^{(1)}+1}\bigr)$ in the form $M = 2^{\bar n} +s,\ s\in[0, 2^{\bar n})$, where $\bar n\in \bigl(K_{m-1}^{(1)}, K_{m}^{(1)}\bigr]$ for some $m\in[1,N_1]$. Since intervals $\Delta_i^{(1)}\ (i=\overline{1,N_1})$ are disjoint, by using \eqref{3}, \eqref{6}, \eqref{8}--\eqref{12} we have

\begin{equation*}
\left\| \sum_{k=2^{n_0}}^M \delta_ka_k W_k\right\|_{L^p[0,1]}\leq 
\left\|\sum_{i=1}^{m-1}\overline H_i^{(1)}\right\|_{L^p[0,1]} + \left\| \sum_{k=2^{\bar n}}^{2^{\bar n}+s}\delta_k a_kW_k\right\|_{L^p[0,1]}\leq
\end{equation*}
\begin{equation*}
\leq\|H_1\|_{L^p[0,1]} 
+C_p\bigl\|\overline H_m^{(1)}\bigr\|_{L^p[0,1]}\leq
\end{equation*}
\begin{equation*}
=\left(|l|^p|E_1|+ |l|^p|\widetilde E_1|\right)^{\frac{1}{p}}+C_p|l|\big| \Delta_m^{(1)}\big|^{\frac{1}{p}}<2C|l||\Delta|^{\frac{1}{p}},
\end{equation*}
where $C=C_p+1$.

Further, for each natural number $n\in \bigl[n_0, K_{N_1}^{(1)}\bigr]$ we denote $b_n=a_k,\ k\in[2^n, 2^{n+1})$ (coefficients $a_k$ of Walsh functions from n--th group are equal in $H_1(x)$). Taking into account \eqref{1.b}, \eqref{2}, \eqref{4}, \eqref{7}, \eqref{12} and b) condition for numbers $K_{i}^{(1)}\ (i=\overline{1,N_1})$, we get

\begin{equation*}
\sum_{n={n_0}}^{K_{N_1}^{(1)}}b_n=\sum_{i=1}^{N_1} \sum_{n={K_{i-1}^{(1)}+1}}^{{K_{i}^{(1)}}} b_n=\sum_{i=1}^{N_1}\bigl(K_{i}^{(1)}-K_{i-1}^{(1)}\bigr)|l|2^{-\frac{K_{i}^{(1)}+K_1+1}{2}}<\frac{\varepsilon}{4},
\end{equation*}
\begin{equation*}
\left\| \sum_{k=2^{n_0}}^Ma_k W_k\right\|_{L^1[0,1]}\leq\sum_{n={n_0}}^{\bar n-1}b_n + \left\|\sum_{k=2^{\bar n}}^{2^{\bar n}+s} a_k W_k\right\|_{L^1[0,1]}\leq
\end{equation*}
\begin{equation*}
\leq\sum_{n={n_0}}^{K_{N_1}^{(1)}}b_n+
\left\|\sum_{k=2^{\bar n}}^{2^{\bar n+1}-1} b_{\bar n} W_k\right\|_{L^2[0,1]}<\frac{\varepsilon}{4}+|l|2^{-\frac{K_{m}^{(1)}+K_1+1}{2}}2^{\frac{\bar n}{2}}<\varepsilon,
\end{equation*}
which proves the statement 4) of lemma \ref{lem2}.

Assume, that for $q>1$ natural numbers 
\begin{equation*}
K^{(1)}_1<\dots<K^{(1)}_{N_{1}}<\dots<K^{(q-1)}_1<\dots<K^{(q-1)}_{N_{q-1}},\end{equation*}
sets 
\begin{equation*}
\widetilde E_{q-1}\subset \Delta\quad \hbox{and}\quad E_{q-1}=\Delta\setminus \widetilde E_{q-1}
\end{equation*}
and polynomials
\begin{equation*}
P_{q-1}(x)=\sum_{k=2^{n_0}}^{{2^{K_{N_{q-1}}^{(q-1)}+1}}-1}a_kW_k(x),
\end{equation*}
\begin{equation*}
H_{q-1}(x)=\sum_{k=2^{n_0}}^{{2^{K_{N_{q-1}}^{(q-1)}+1}}-1}\delta_ka_kW_k(x),\quad \delta_k=\pm1, 0
\end{equation*}
are already chosen to satisfy the conditions

\smallskip
$a^\prime$) $\frac{K_i^{(\nu)}-K_{N_{\nu-1}}^{(\nu-1)}-1}{2}$ is a whole number
$\bigr(K_{N_{0}}^{(0)}\equiv K_1\bigl),$ 

\smallskip
$b^\prime$) $\bigl(K_i^{(\nu)}-K_{i-1}^{(\nu)}\bigr)2^{(\nu-1)}|l|2^{-\frac{K_i^{(\nu)}+K_{N_{\nu-1}}^{(\nu-1)}+1}{2}}<\frac{\varepsilon}{2^{\nu +1}N_{\nu}},$

\smallskip
$c^\prime$) $2^{\nu}|l|2^{-\frac{K_i^{(\nu)}+1}{2}}<\frac{\varepsilon}{2},$\newline

\smallskip
\begin{equation}
a_k = 2^{\nu-1}|l|2^{-\frac{K_i^{(\nu)}+K_{N_{\nu-1}}^{(\nu-1)}+1}{2}}\quad \hbox{for}\quad k\in \bigl[2^{K_{i-1}^{(\nu)}+1}, 2^{K_i^{(\nu)}+1}\bigl),\label{13}
\end{equation}
\begin{equation*}
K_0^{(\nu)}\equiv 
\begin{cases}
K_{N_{\nu-1}}^{(\nu-1)}, & \hbox{if} \quad \nu>1,
\\
n_0-1, & \hbox{if} \quad \nu=1,
\end{cases}
\end{equation*}
for any natural numbers $i\in[1,N_{\nu}]$ and $\nu\in [1, q-1]$. Besides,
\begin{equation}
\sum_{n={n_0}}^{K_{N_{q-1}}^{(q-1)}} b_n<\sum_{k=1}^{q-1}\frac{\varepsilon}{2^{k+1}},\quad\hbox{where}\quad b_n\equiv a_k,\ k\in[2^n, 2^{n+1}),\label{14}
\end{equation}
\begin{equation}
H_{q-1}(x)=
\begin{cases}
-(2^{q-1}-1)l,& \hbox{for} \quad x\in \widetilde E_{q-1},
\\
l, & \hbox{for} \quad x\in E_{q-1},
\\
0,& \hbox{for} \quad x\notin \Delta,
\end{cases}\label{15}
\end{equation}

\begin{equation}
\big|\widetilde E_{q-1}\big|= 2^{-q+1}|\Delta|\quad \hbox{and}\quad \big|E_{q-1}\big| = \bigr(1-2^{-q+1}\bigr)|\Delta|\label{16}
\end{equation}
and the set  $\widetilde E_{q-1}$ can be presented as a union of certain $N_q$ number of disjoint dyadic intervals
\begin{equation}
\widetilde E_{q-1}=\bigcup_{i=1}^{N_{q}}\Delta_i^{(q)}\label{17}
\end{equation}
with measure $\big|\Delta_i^{(q)}\big|=2^{-K^{(q-1)}_{N_{q-1}}-1},\ i=\overline{1,N_q}.$

For each natural number $i\in[1,N_{q}]$ we choose a natural number $K_i^{(q)}>K_{i-1}^{(q)}$ $\bigl(K_{0}^{(q)}\equiv K_{N_{q-1}}^{(q-1)}\bigr)$ such that the following conditions hold: 

\smallskip
$a^{\prime\prime})$ $\frac{K_i^{(q)}-K_{N_{q-1}}^{(q-1)}-1}{2}$ is a whole number, 

\smallskip
$b^{\prime\prime})$ $\bigl(K_i^{(q)}-K_{i-1}^{(q)}\bigr)2^{(q-1)}|l|2^{-\frac{K_i^{(q)}+K_{N_{q-1}}^{(q-1)}+1}{2}}<\frac{\varepsilon}{2^{q+1}N_{q}},$

\smallskip
$c^{\prime\prime})$ $2^{q}|l|2^{-\frac{K_i^{(q)}+1}{2}}<\frac{\varepsilon}{2}.$

\smallskip
By successively applying  lemma \ref{lem1} for each interval $\Delta_i^{(q)}\subset \widetilde E_{q-1}\ (i=\overline{1,N_{q}})$ and corresponding number $K_i^{(q)}$, we can find polynomials in the Walsh system 
\begin{equation}
\overline H_i^{(q)}(x)=\sum_{k=2^{K_i^{(q)}}}^{2^{K_i^{(q)}+1}-1}\bar a_kW_k(x),\quad i=\overline{1,N_{q}},\label{18}
\end{equation}
such that
\begin{equation}
|\bar a_k|= 2^{q-1}|l|2^{-\frac{K_i^{(q)} + K_{N_{q-1}}^{(q-1)}+1}{2}},\quad  \hbox{when} \quad k\in \bigl[2^{K_i^{(q)}}, 2^{K_i^{(q)}+1}\bigr),\label{19}
\end{equation}

\begin{equation}
\overline H_i^{(q)}(x)=
\begin{cases}
-2^{q-1}l, & \hbox{for} \quad x\in \widetilde{E_i}^{(q)}\subset \Delta_i^{(q)},\quad \big|\widetilde{E_i}^{(q)}\big|=\frac{1}{2}\big|\Delta_i^{(q)}\big|,
\\
2^{q-1}l, & \hbox{for} \quad x\in \widetilde{\widetilde{E_i}}^{(q)}\subset \Delta_i^{(q)},\quad \big|\widetilde{\widetilde{E_i}}^{(q)}\big| =\frac{1}{2}\big|\Delta_i^{(q)}\big|,
\\
0, & \hbox{for} \quad x\notin \Delta_i^{(q)}.
\end{cases}\label{20}
\end{equation}

Hence, by denoting
\begin{equation}
H_q(x)=H_{q-1}(x)+\sum_{i=1}^{N_{q}}\overline H_i^{(q)}(x)\label{21}
\end{equation}
and taking into account \eqref{15} and \eqref{17}, we obtain
\begin{equation}
H_q(x)=
\begin{cases}
-(2^q-1)l, & \hbox{for} \quad x\in \widetilde E_q\subset \widetilde E_{q-1},\quad  \big|\widetilde E_q\big|=\frac{|\Delta|}{2^q},
\\
l, & \hbox{for} \quad x\in \Delta\setminus \widetilde E_q,
\\
0, & \hbox{for} \quad x\notin \Delta.
\end{cases}\label{22}
\end{equation}

Now, after defining
\begin{equation}
E_q=\Delta\setminus \widetilde E_q\label{23}
\end{equation}
and 
\begin{equation}
\begin{cases}
\bar a_k = 2^{q-1}|l|2^{-\frac{K_i^{(q)}+K_{N_{q-1}}^{(q-1)}+1}{2}},\quad \hbox{when}\quad  k\in \bigl[2^{K_{i-1}^{(q)}+1}, 2^{K_i^{(q)}}\bigr),
\\
\bar\delta_k =
\begin{cases}
0, & \hbox{when} \quad k\in \bigl[2^{K_{i-1}^{(q)}+1}, 2^{K_i^{(q)}}\bigr),
\\
1, & \hbox{when} \quad k\in\bigl[2^{K_i^{(q)}}, 2^{K_i^{(q)}+1}\bigr),
\end{cases}\quad i\in [1, N_{q}],
\\
a_k=|\bar a_k|,\quad \delta_k=\bar\delta_k \cdot \frac{\bar a_k}{|\bar a_k|},\quad\hbox{when}\quad k\in\bigl[2^{n_0},2^{K_{N_q}^{(q)}+1}\bigr),
\end{cases}\label{24}
\end{equation}
let us verify that the set $E_q$ and polynomials 
\begin{equation*}
P_q(x)=\sum_{k=2^{n_0}}^{{2^{n_q}}-1}a_kW_k(x),
\end{equation*}
\begin{equation*}
H_q(x)=\sum_{k=2^{n_0}}^{{2^{n_q}}-1}\delta_ka_kW_k(x),\quad \delta_k=0,\pm1,
\end{equation*}
where $n_q\equiv K_{N_{q}}^{(q)}+1$, satisfy all statements of lemma \ref{lem2}. Indeed, from \eqref{22} and \eqref{23} it follows that $\big|E_q\big|= (1-2^{-q})|\Delta|$. The statement 1) follows from \eqref{5}, \eqref{13}, \eqref{19}, \eqref{24} and from monotonicity of numbers $K^{(\nu)}_i,\ i\in[1,N_{\nu}],\ \nu\in[1,q]$. The statement 2) immediately follows from \eqref{22} and \eqref{23}. To prove statements 3) and 4) we present the natural number $M\in [2^{n_0}, 2^{n_q})$ in the form $M = 2^{\bar n} + s,\ s\in[0, 2^{\bar n})$. Let us consider only the case when $\bar n\in\bigl(K_{N_{q-1}}^{(q-1)}, K_{N_{q}}^{(q)}\bigr]$, since all other cases are under consideration in previous steps of induction. Let $\bar n\in\bigl(K_{m-1}^{(q)}, K_{m}^{(q)}\bigr]$ for some $m\in[1,N_q]$. From \eqref{3}, \eqref{15}--\eqref{18}, \eqref{20} and \eqref{24} we have

\begin{equation*}
\left\| \sum_{k=2^{n_0}}^M \delta_ka_k W_k\right\|_{L^p[0,1]}\leq 
\left\|H_{q-1}+\sum_{i=1}^{m-1}\overline H_i^{(q)}\right\|_{L^p[0,1]}+
\end{equation*}
\begin{equation*}
+ \left\| \sum_{k=2^{\bar n}}^{2^{\bar n}+s}\delta_k a_kW_k\right\|_{L^p[0,1]}\leq\|H_q\|_{L^p[0,1]} 
+C_p\bigl\|\overline H_m^{(q)}\bigr\|_{L^p[0,1]}<
\end{equation*}
\begin{equation*}
<\left(|l|^p\big|E_q\big| + 2^{pq}|l|^p\big|\widetilde E_q\big|\right)^{\frac{1}{p}}+C_p2^{q-1}|l|\big|\Delta_m^{(q)}\big|^{\frac{1}{p}}<2^qC|l||\Delta|^{\frac{1}{p}}
\end{equation*}
$(C=C_p+1)$, which proves the statement 3).

Further, for each natural number $n\in\bigl[n_0,K_{N_{q}}^{(q)}\bigr]$ we denote
\begin{equation*}
b_n\equiv a_k,\quad\hbox{when}\quad k\in[2^n, 2^{n+1}).
\end{equation*}
Taking into account \eqref{1.b}, \eqref{2}, \eqref{14}, \eqref{19}, \eqref{24}, $c^\prime)$ condition for number $K_{N_{q-1}}^{(q-1)}$ and $b^{\prime\prime})$ condition for numbers $K_{i}^{(q)} (i=\overline{1,N_q})$ we get
\begin{equation*}
\left\| \sum_{k=2^{n_0}}^Ma_k W_k\right\|_{L^1[0,1]}\leq\sum_{n={n_0}}^{\bar n-1}b_n + \left\|\sum_{k=2^{\bar n}}^{2^{\bar n}+s} a_k W_k\right\|_{L^1[0,1]}\leq
\end{equation*}
\begin{equation*}
\leq\sum_{n={n_0}}^{K_{N_{q-1}}^{(q-1)}} b_n +\sum_{i=1}^{N_{q}}\sum_{n={K_{i-1}^{(q)}+1}}^{{K_{i}^{(q)}}} b_n+
\left\|\sum_{k=2^{\bar n}}^{2^{\bar n+1}-1} b_{\bar n} W_k\right\|_{L^2[0,1]}\leq
\end{equation*}
\begin{equation*}
\leq\sum_{k=1}^{q-1}\frac{\varepsilon}{2^{k+1}}+\sum_{i=1}^{N_{q}}\bigl(K_{i}^{(q)}-K_{i-1}^{(q)}\bigr)2^{q-1}|l|2^{-\frac{K_{i}^{(q)}+K_{N_{q-1}}^{(q-1)}+1}{2}}+
\end{equation*}
\begin{equation*}
+2^{q-1}|l|2^{-\frac{K_{m}^{(q)}+K_{N_{q-1}}^{(q-1)}+1}{2}}2^{\frac{\bar n}{2}}<\varepsilon,
\end{equation*}
which proves the statement 4).

Lemma \ref{lem2} is proved.
\end{proof}

\begin{lemma}\label{lem3}
Let numbers $p_0>1,\ n_0\in\mathbb{N},\ 0<\varepsilon<1$ and polynomial $f(x)\not \equiv 0$ in the Walsh system be given. Then one can find a measurable set $E_\varepsilon$ with measure $|E_\varepsilon|> 1-\varepsilon$ and  polynomials 
\begin{equation*}
P(x)=\sum_{k=2^{n_{0}}}^{2^{n}-1}a_{k}W_{k}(x)\quad\hbox{and}\quad
H(x)=\sum_{k=2^{n_{0}}}^{2^{n}-1}\delta_{k}a_{k}W_{k}(x),\quad\delta_{k}=0,\pm1,
\end{equation*}
in the Walsh system, which satisfies the following conditions:
\begin{equation*}
0<a_{k+1}<a_{k}<\varepsilon ,\quad k\in\mathit{[\mathit{2}^{n_{0}},2^{n}-1)},\leqno1)
\end{equation*}
\begin{equation*}
\|f(x)-H(x)\|_{L^{p_0}(E_\varepsilon)}<\varepsilon, \leqno2)
\end{equation*}
\begin{equation*}
\max_{2^{n_{0}}\leq M<2^{n}}\left\|\sum_{k=2^{n_{0}}}^{M}\delta_{k}
a_{k}W_{k}(x)\right\|_{L^p(e)}<\|f(x)|\|_{L^p(e)}+\varepsilon\leqno3)
\end{equation*}
for any measurable set $e\subseteq E_\varepsilon$ and $p\in[1,p_0]$,
\begin{equation*}
\max_{2^{n_{0}}\leq M<2^{n}}\left\|\sum_{k=2^{n_{0}}}^{M}a_{k}
W_{k}(x)\right\|_{L^1[0,1]}<\varepsilon.\leqno4)
\end{equation*}
\end{lemma}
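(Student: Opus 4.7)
Since $f$ is a Walsh polynomial, it is constant on dyadic intervals of some level $K$, so I can write $f=\sum_{i=1}^{N}l_i\chi_{\Delta_i}$ where $\Delta_i$ are disjoint dyadic intervals of level $K$ covering $[0,1]$. I am free to refine $K$ as much as needed: the values $l_i$ remain bounded by $\|f\|_\infty$ while $|\Delta_i|$ becomes arbitrarily small. The plan is to apply Lemma \ref{lem2} successively to each $\Delta_i$ with $l_i\neq 0$ and concatenate the outputs in disjoint dyadic spectral blocks.

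Set $n^{(0)}=n_0$. For $i=1,\dots,N$: if $l_i=0$ do nothing; otherwise invoke Lemma \ref{lem2} with data $(\Delta_i, l_i, n^{(i-1)}, \varepsilon_i, q)$, obtaining a set $E_q^{(i)}\subset\Delta_i$ of measure $(1-2^{-q})|\Delta_i|$ and polynomials $P_q^{(i)},H_q^{(i)}$ spectrally supported on $[2^{n^{(i-1)}},2^{n^{(i)}})$. Then define
\begin{equation*}
E_\varepsilon=\bigcup_{l_i\neq 0}E_q^{(i)}\;\cup\;\bigcup_{l_i=0}\Delta_i,\qquad P=\sum_i P_q^{(i)},\qquad H=\sum_i H_q^{(i)}.
\end{equation*}
I fix parameters in this order: first choose $q$ with $2^{-q}<\varepsilon$ (so $|E_\varepsilon|\geq 1-2^{-q}>1-\varepsilon$); then refine $K$ so that $2^q(C_{p_0}+1)\|f\|_\infty|\Delta_i|^{1/p_0}<\varepsilon$ for every $i$; finally pick $\varepsilon_i$ in a summable cascade with $\sum_i\varepsilon_i<\varepsilon$, additionally insisting that each $\varepsilon_i$ be strictly smaller than the minimum coefficient already produced in $P_q^{(1)},\dots,P_q^{(i-1)}$, which forces the $a_k$'s to decrease across the blocks.

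For the verifications: Statement 2 is immediate, as on every piece of $E_\varepsilon$ the polynomial $H$ coincides with $f$ pointwise (by Lemma \ref{lem2}(2) on each $E_q^{(i)}$; trivially where $l_i=0$), so $\|f-H\|_{L^{p_0}(E_\varepsilon)}=0$. Statement 4 follows block-by-block from Lemma \ref{lem2}(4) and $\sum_i\varepsilon_i<\varepsilon$. Statement 1 combines monotonicity within each block (Lemma \ref{lem2}(1)) with the choice of $\varepsilon_i$ between blocks. For statement 3 and $M\in[2^{n^{(i-1)}},2^{n^{(i)}})$, I split
\begin{equation*}
S_M:=\sum_{k=2^{n_0}}^{M}\delta_k a_k W_k=\sum_{j<i}H_q^{(j)}+T_M^{(i)},
\end{equation*}
where $T_M^{(i)}$ is a partial sum of $H_q^{(i)}$. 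On $E_\varepsilon$ the first summand equals $f$ on $\bigcup_{j<i}E_q^{(j)}\cup\bigcup_{l_j=0}\Delta_j$ and vanishes elsewhere, while $T_M^{(i)}$ is supported in $\Delta_i$ and controlled by Lemma \ref{lem2}(3). Minkowski's inequality then yields $\|S_M\|_{L^p(e)}\leq\|f\|_{L^p(e)}+\|T_M^{(i)}\|_{L^p([0,1])}$; on the probability space $[0,1]$ the $L^p$-norm is dominated by the $L^{p_0}$-norm, which is $<2^q(C_{p_0}+1)|l_i||\Delta_i|^{1/p_0}<\varepsilon$ by the refinement choice.

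The main obstacle is coordinating the three parameter families $q$, $K$, and $(\varepsilon_i)$ in the right order so that all four conclusions hold simultaneously and uniformly for $p\in[1,p_0]$ and arbitrary measurable $e\subseteq E_\varepsilon$; the clean splitting above succeeds precisely because the ``completed'' blocks $H_q^{(j)}$ (for $j<i$) reproduce $f$ exactly on $E_\varepsilon$, leaving only the single in-progress block $T_M^{(i)}$ to estimate.
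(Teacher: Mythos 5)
Your plan is essentially the paper's own proof: decompose $f$ into its dyadic steps, fix $q$ with $2^{-q}<\varepsilon$, refine the partition so that $2^qC|l_i||\Delta_i|^{1/p_0}$ is small, apply Lemma \ref{lem2} block by block on disjoint spectral ranges with a summable cascade of $\varepsilon_i$'s, and for statement 3) exploit the fact that the completed blocks reproduce $f$ exactly on $E_\varepsilon$ while the single in-progress block is controlled in $L^{p_0}[0,1]\supseteq L^p(e)$. All of that matches the paper and is sound.

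There is, however, one genuine gap: statement 1) of Lemma \ref{lem3} demands \emph{strictly} decreasing coefficients, $a_{k+1}<a_k$, whereas Lemma \ref{lem2} only delivers $a_{k+1}\leq a_k$ --- and this is not a cosmetic weakening: by construction the coefficients produced by Lemma \ref{lem2} are \emph{constant} on each dyadic group $[2^m,2^{m+1})$ (they come from Lemma \ref{lem1}, where $|a_k|=2^{-(M+K)/2}$ for the whole group). So your claim that statement 1) ``combines monotonicity within each block with the choice of $\varepsilon_i$ between blocks'' fails inside every block. The paper repairs this by a final perturbation $a_k=\bar a_k+2^{-(N_0+k)}$ with $2^{-N_0}<\varepsilon/2$, which forces strict decrease, and then re-verifies statements 2)--4) with an extra additive error $2^{-N_0}$; to absorb that error your intermediate bounds must be tightened to $\varepsilon/2$ (as the paper does in \eqref{26}, \eqref{30}, \eqref{31}) rather than $\varepsilon$. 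With that one additional step your argument is complete.
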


\begin{proof}
We choose a natural number $q$, so that
\begin{equation}
2^{-q}<\varepsilon,\label{25}
\end{equation}
and present the function $f(x)$ in the form
\begin{equation*}
f(x) = \sum_{j=1}^{\nu_0} l_j\chi_{\Delta_j}(x),
\end{equation*}
where $l_j\neq 0,\ j=\overline{1,\nu_0}$, and $\left\{\Delta_j\right\}_{j=1}^{\nu_0}$ are disjoint dyadic subintervals of the section $[0,1]$. Without loss of generality we can assume that all these intervals have the same length and are small enough to provide the condition 
\begin{equation}
\max_{1\leq j\leq \nu_0}\bigl\{2^qC|l_j|\big|\Delta_j\big|^{\frac{1}{p}}\bigr\}<\frac{\varepsilon}{2}.\label{26}
\end{equation}

By successively applying lemma \ref{lem2} for each interval $\Delta_j,\ j=\overline{1,\nu_0}$, and taking into account \eqref{25} and \eqref{26}, we can find sets $E_q^{(j)}\subset \Delta_j$ with measure
\begin{equation}
\big|E_q^{(j)}\big|= (1-2^{-q})\big|\Delta_j\big|>(1-\varepsilon)\big|\Delta_j\big|\label{27}
\end{equation}
and polynomials
\begin{equation*}
\bar P_q^{(j)}(x)=\sum_{k=2^{n_{j-1}}}^{{2^{n_j}}-1}\bar a_k^{(j)}W_k(x),
\end{equation*}
\begin{equation*}
\bar H_q^{(j)}(x)=\sum_{k=2^{n_{j-1}}}^{{2^{n_j}}-1}\delta_k^{(j)}\bar a_k^{(j)}W_k(x),\quad \delta_k^{(j)}=\pm 1, 0
\end{equation*}
in the Walsh system, so that $\bar H_q^{(j)}(x)=0$ outside $\Delta_j$,
\begin{equation}
\begin{cases}
0<\bar a_{k+1}^{(1)}\leq \bar a_k^{(1)}<\frac{\varepsilon}{2}, &\hbox{for all}\quad k\in[2^{n_{0}}, 2^{n_1}-1),
\\
0<\bar a_{k+1}^{(j)}\leq \bar a_k^{(j)}<\bar a_{{2^{n_{j-1}}}-1}^{(j-1)}, &\hbox{for all}\quad k\in[2^{n_{j-1}}, 2^{n_j}-1),\quad  j>1,
\end{cases}\label{28}
\end{equation}

\begin{equation}
\left\|l_j\chi_{\Delta_j}- \bar H_q^{(j)}\right\|_{L^{p_0}\bigl(E_q^{(j)}\bigr)}=0, \label{29}
\end{equation}
\begin{equation}
\max_{2^{n_{j-1}}\leq M < 2^{n_j}}\left\| \sum_{k=2^{n_{j-1}}}^M \delta_k^{(j)}\bar a_k^{(j)} W_k\right\|_{L^{p_0}[0,1]}< 2^qC|l_j|\big|\Delta_j\big|^{\frac{1}{p_0}}<\frac{\varepsilon}{2},\label{30}
\end{equation}
\begin{equation}
\max_{2^{n_{j-1}}\leq M < 2^{n_j}}\left\| \sum_{k=2^{n_{j-1}}}^M\bar a_k^{(j)} W_k\right\|_{L^1[0,1]}<\frac{\varepsilon}{2^{j+1}}.\label{31}
\end{equation}

We define a set 
\begin{equation}
E_\varepsilon = \bigcup_{j=1}^{\nu_0}E_q^{(j)}\bigcup\left([0,1]/\cup_{j=1}^{\nu_0}\Delta_j\right)\label{32}
\end{equation}
and polynomials 
\begin{equation*}
\bar P(x) =\sum_{j=1}^{\nu_0}\bar P_q^{(j)}(x)=\sum_{k=2^{n_0}}^{2^{n_{\nu_0}-1}}\bar a_kW_k(x),
\end{equation*}
\begin{equation*}
\bar H(x) =\sum_{j=1}^{\nu_0}\bar H_q^{(j)}(x)=\sum_{k=2^{n_0}}^{2^{n_{\nu_0}-1}}\delta_k\bar a_kW_k(x),
\end{equation*}
where $\bar a_k=\bar a_k^{(j)}$ and $\delta_k=\delta_k^{(j)}$, when $k\in[2^{n_{j-1}},2^{n_j})$. Note that $\bar H_q^{(j)} = 0$ on the set $[0,1]/\cup_{j=1}^{\nu_0}\Delta_j$ (in case it is not empty) for any $j\in[1,\nu_0]$. 

From \eqref{27}--\eqref{29} and \eqref{32} it follows that
\begin{equation*}
\big|E_\varepsilon\big|>1-\varepsilon,
\end{equation*}
\begin{equation}
0<\bar a_{k+1}\leq \bar a_k<\frac{\varepsilon}{2},\quad\hbox{when}\quad k\in[2^{n_0}, 2^{n_{\nu_0}}-1),\label{33}
\end{equation}
\begin{equation}
\left\|f-\bar H\right\|_{L^{p_0}(E_\varepsilon)}\leq\sum_{j=1}^{\nu_0}
\left\|l_j\chi_{\Delta_j}- \bar H_q^{(j)}\right\|_{L^{p_0}\bigl(E_q^{(j)}\bigr)}=0.\label{34}
\end{equation}

Further, let $M$ be a natural number from $[2^{n_0},2^{n_{\nu_0}})$. Then $M\in [2^{n_{m-1}},2^{n_m})$ for some $m\in[1,\nu_0]$. Taking into account \eqref{29}, \eqref{30} and \eqref{32}, for any measurable set $e\subseteq E_\varepsilon$ and $p\in[1,p_0]$ we have
\begin{equation}
\left\| \sum_{k=2^{n_0}}^M \delta_k\bar a_k W_k\right\|_{L^p(e)}\leq\label{35}
\end{equation}
\begin{equation*}
\leq\left\| \sum_{j=1}^{m-1} \bar H_q^{(j)}\right\|_{L^p(e)}+
\left\| \sum_{k=2^{n_{m-1}}}^M \delta_k^{(m)}a_k^{(m)} W_k\right\|_{L^p(e)}\leq
\end{equation*}
\begin{equation*}
\leq \sum_{j=1}^{m-1}\left\| l_j\chi_{\Delta_j}-\bar H_q^{(j)}\right\|_{L^{p_0}\bigl(E_q^{(j)}\bigr)}+\left\|\sum_{j=1}^{m-1}l_j\chi_{\Delta_j}\right\|_{L^p(e)}+
\end{equation*}
\begin{equation*}
+\left\| \sum_{k=2^{n_{m-1}}}^M \delta_k^{(m)}a_k^{(m)} W_k\right\|_{L^{p_0}[0,1]}<\|f\|_{L^p(e)}+\frac{\varepsilon}{2}
\end{equation*}
and, by using \eqref{31}, we obtain
\begin{equation}
\left\|\sum_{k=2^{n_0}}^M \bar a_k W_k\right\|_{L^1[0,1]}\leq\sum_{j=1}^{\nu_0}\max_{2^{n_{j-1}}\leq N < 2^{n_j}}\left\| \sum_{k=2^{n_{j-1}}}^N\bar a_k^{(j)} W_k\right\|_{L^1[0,1]}<\frac{\varepsilon}{2}.\label{36}
\end{equation}

Hence, polynomials $\bar P(x)$ and $\bar H(x)$ satisfy all statements of lemma 3 except for 1). To have strict inequalities between coefficients we choose such a natural number  $N_0$ that
\begin{equation}
2^{-N_0}<\frac{\varepsilon}{2}\label{37}
\end{equation} 
and define polynomials
\begin{equation*}
P(x)=\sum_{k=2^{n_0}}^{2^{n_{\nu_0}}-1}a_kW_k(x)\quad\hbox{and}\quad H(x)=\sum_{k=2^{n_0}}^{2^{n_{\nu_0}}-1}\delta_ka_kW_k(x),
\end{equation*}
where
\begin{equation}
a_k = \bar a_k + 2^{-(N_0+k)}.\label{38}
\end{equation}

It is not hard to verify  that polynomials $P(x)$ and $H(x)$ satisfy all statements of lemma \ref{lem3}. Indeed, the statement 1) immediately follows from \eqref{33}, \eqref{37} and \eqref{38}.  Further, considering \eqref{34}--\eqref{38} for each natural number $M\in [2^{n_0},2^{n_{\nu_0}})$, measurable set $e\subseteq E_\varepsilon$ and $p\in[1,p_0]$ we get

\begin{equation*}
\left\|f-H\right\|_{L^{p_0}(E_\varepsilon)}
\leq\left\|f-\bar H\right\|_{L^{p_0}(E_\varepsilon)}+\left\|\sum_{k=2^{n_0}}^{2^{n_{\nu_0}}-1}\delta_k2^{-(N_0+k)}W_k\right\|_{L^{p_0}[0,1]}\leq
\end{equation*}
\begin{equation*}
\leq\sum_{k=2^{n_0}}^{2^{n_{\nu_0}}-1}\left\|\delta_k2^{-(N_0+k)}W_k\right\|_{L^{p_0}[0,1]}<2^{-N_0}<\varepsilon,
\end{equation*}

\begin{equation*}
\left\| \sum_{k=2^{n_0}}^M \delta_ka_k W_k\right\|_{L^p(e)}\leq\left\| \sum_{k=2^{n_0}}^M \delta_k\bar a_k W_k\right\|_{L^p(e)}+\left\|\sum_{k=2^{n_0}}^{M}\delta_k2^{-(N_0+k)}W_k\right\|_{L^p[0,1]}\leq
\end{equation*}
\begin{equation*}
\|f\|_{L^p(e)}+\frac{\varepsilon}{2}+\sum_{k=2^{n_0}}^{M}\left\|\delta_k2^{-(N_0+k)}W_k\right\|_{L^p[0,1]}
<\|f\|_{L^p(e)}+\frac{\varepsilon}{2}+2^{-N_0}<\|f\|_{L^p(e)}+\varepsilon
\end{equation*}
and 
\begin{equation*}
\left\| \sum_{k=2^{n_0}}^M a_k W_k\right\|_{L^1[0,1]}\leq\left\| \sum_{k=2^{n_0}}^M \bar a_k W_k\right\|_{L^1[0,1]}+\sum_{k=2^{n_0}}^{M}\left\|2^{-(N_0+k)}\right\|_{L^1[0,1]}<
\end{equation*}
\begin{equation*}
<\frac{\varepsilon}{2}+2^{-N_0}<\varepsilon.
\end{equation*}

Lemma \ref{lem3} is proved.

\end{proof}

Now with help of Lemma \ref{lem3} we will prove the main lemma of the paper, which will be used in the proof of the main theorem.

\begin{lemma}\label{lem4}
For any $\delta\in(0,1)$ there exist a weight function $0<\mu(x)\leq1,$ with $|\{x\in[0,1];\ \mu(x)=1\}|>1-\delta$, so that for any numbers $p_0>1$, $n_0\in\mathbb{N}$, $\varepsilon\in(0,1)$ and polynomial $f(x)\not \equiv 0$ in the Walsh system, one can find polynomials in the Walsh system
\begin{equation*}
P(x)=\sum_{k=2^{n_{0}}}^{2^{n}-1}a_{k}W_{k}(x)\quad\hbox{and}\quad
H(x)=\sum_{k=2^{n_{0}}}^{2^{n}-1}\delta_{k}a_{k}W_{k}(x),\quad\delta_{k}=\pm1, 0
\end{equation*}
satisfying the following conditions:
\begin{equation*}
0<a_{k+1}<a_{k}<\varepsilon ,\quad k\in\mathit{[\mathit{2}^{n_{0}},2^{n}-1)},\leqno1)
\end{equation*}
\begin{equation*}
\|f-H\|_{L^{p_0}_\mu [0,1]}<\varepsilon, \leqno2)
\end{equation*}
\begin{equation*}
\max_{2^{n_{0}}\leq M<2^{n}}\left\|\sum_{k=2^{n_{0}}}^{M}\delta_{k}
a_{k}W_{k}\right\|_{L^p_\mu[0,1]}<2||f||_{L^p_\mu[0,1]}+\varepsilon,\quad \forall p\in[1,p_0],\leqno3)
\end{equation*}
\begin{equation*}
\max_{2^{n_{0}}\leq M<2^{n}}\left\|\sum_{k=2^{n_{0}}}^{M}a_{k}
W_{k}\right\|_{L^1[0,1]}<\varepsilon.\leqno4)
\end{equation*}
\end{lemma}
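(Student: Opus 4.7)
The plan is to first construct the weight $\mu$ depending only on $\delta$, and then, for each data $(p_0,n_0,\varepsilon,f)$, to produce $P, H$ from a single application of Lemma \ref{lem3} with an auxiliary parameter $\varepsilon_1\in(0,\varepsilon)$. For $\mu$, I would fix a disjoint union $A:=\bigcup_{k\ge 1}\Delta^{(k)}$ of dyadic intervals with $\sum_k|\Delta^{(k)}|<\delta$, distributed so densely throughout $[0,1]$ that every sufficiently fine dyadic subinterval of $[0,1]$ meets some $\Delta^{(k)}$, and set $\mu\equiv 1$ on $G:=[0,1]\setminus A$ and $\mu\equiv\gamma_k$ on $\Delta^{(k)}$, where $\gamma_k\downarrow 0$ is a sequence decaying faster than any exponential, to be tuned at the end of the estimates. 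Then $0<\mu\le 1$ and $|\{\mu=1\}|=|G|>1-\delta$.

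Given $(p_0,n_0,\varepsilon,f)$, write $C_f:=\|f\|_\infty$ (finite, since $f$ is a Walsh polynomial) and apply Lemma \ref{lem3} with parameter $\varepsilon_1\in(0,\varepsilon)$, to be calibrated. This yields polynomials $P(x)=\sum a_kW_k(x)$ and $H(x)=\sum\delta_ka_kW_k(x)$ of the required form and a set $E_{\varepsilon_1}$, $|E_{\varepsilon_1}|>1-\varepsilon_1$, satisfying Lemma \ref{lem3}'s conditions 1)--4). Conditions 1) and 4) of the present lemma then transfer verbatim, both being unweighted and literally identical once $\varepsilon_1\le\varepsilon$.

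For conditions 2) and 3), decompose each weighted integral over $[0,1]$ into three pieces over $E_{\varepsilon_1}\cap G$, $E_{\varepsilon_1}\cap A$ and $[0,1]\setminus E_{\varepsilon_1}$, and estimate each. On $E_{\varepsilon_1}\cap G$ the weight is $1$, and condition 2) (resp.\ 3)) of Lemma \ref{lem3} applied with $e=E_{\varepsilon_1}\cap G\subseteq E_{\varepsilon_1}$ gives exactly the desired bound, since $\|f\|_{L^p(e)}\le\|f\|_{L^p_\mu[0,1]}$ (using $\mu=1$ on $e$); the extra factor $2$ in condition 3) of Lemma \ref{lem4} leaves room for the remaining two pieces. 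On $E_{\varepsilon_1}\cap A$ dominate $\mu\le\sup_k\gamma_k$ and again invoke the unweighted Lemma \ref{lem3} bounds; the contribution carries a small factor $(\sup_k\gamma_k)^{1/p}$. On $[0,1]\setminus E_{\varepsilon_1}$ use the pointwise bound $|H|+|f|\lesssim 2^qC_f$, with $q\sim\log_2(1/\varepsilon_1)$ inherent in the construction underlying Lemma \ref{lem3}, so that
\[
\int_{[0,1]\setminus E_{\varepsilon_1}}|f-H|^{p_0}\mu\,dx \;\le\; (2^qC_f)^{p_0}\,\mu\bigl([0,1]\setminus E_{\varepsilon_1}\bigr).
\]

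The technical heart of the proof is the coupling $\varepsilon_1\leftrightarrow q\sim\log(1/\varepsilon_1)$: the pointwise blowup $|H|\sim 2^q\sim 1/\varepsilon_1$ on $E_{\varepsilon_1}^c$ almost cancels its Lebesgue smallness $|E_{\varepsilon_1}^c|<\varepsilon_1$, so the bound $\mu\le 1$ alone is useless on the third piece. The construction of $\mu$ must therefore force $E_{\varepsilon_1}^c\subseteq A$ and arrange that the weights $\gamma_k$ on the relevant dyadic blocks beat $2^{p_0q}$ decisively. I would achieve this either by (a) distributing $\{\Delta^{(k)}\}$ densely enough, at all dyadic levels, that any small union of dyadic intervals lies inside $A$, or by (b) exploiting the flexibility in Lemma \ref{lem1}'s choice of the sets $E_1,E_2$ to steer the bad set produced by Lemmas \ref{lem2}--\ref{lem3} into prescribed sub-intervals of $A$. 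With $\gamma_k$ tuned against the worst admissible $q$, conditions 2) and 3) then follow by Minkowski's inequality applied to the three-part decomposition above.
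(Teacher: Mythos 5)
There is a genuine gap, and it is exactly at the point you call ``the technical heart.'' In Lemma \ref{lem4} the weight $\mu$ is quantified \emph{before} $p_0$, $\varepsilon$ and $f$, so the sequence $\gamma_k$ cannot be ``tuned at the end of the estimates'' against $q\sim\log_2(1/\varepsilon_1)$, $p_0$ and $C_f$: over the admissible data these quantities are unbounded, and there is no ``worst admissible $q$.'' Neither of your two remedies closes this. For (a): a union $A$ of dyadic intervals with $|A|<\delta$ cannot absorb every small finite union of dyadic intervals, since $|[0,1]\setminus A|>1-\delta>0$ forces some dyadic interval at every scale to meet $[0,1]\setminus A$ in positive measure; and the exceptional set produced by Lemma \ref{lem2} is a \emph{specific} set $\widetilde E_q\subset\Delta$ of measure $2^{-q}|\Delta|$ on which $|H_q|=(2^q-1)|l|$, so $\int_{\widetilde E_q}|H_q|^{p_0}dx\sim 2^{q(p_0-1)}|l|^{p_0}|\Delta|\to\infty$; if any fixed fraction of $\widetilde E_q$ lands where $\mu=1$, condition 2) fails. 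For (b): Lemma \ref{lem1} offers no freedom to steer $E_1,E_2$ --- they each have measure $\tfrac12|\Delta|$ and together exhaust $\Delta$, so the location of the bad set is dictated by the construction, not by you. A secondary issue: on $E_{\varepsilon_1}\cap A$ the unweighted bound $\|f\|_{L^p(e)}$ from Lemma \ref{lem3} is not controlled by $\|f\|_{L^p_\mu[0,1]}$ (the weight there is $<1$), so even that piece does not reduce to the claimed form without further care.

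The paper resolves the quantifier problem differently: it first fixes a sequence $p_m\nearrow\infty$ and an enumeration $\{f_m\}$ of \emph{all} Walsh polynomials with rational coefficients, applies Lemma \ref{lem3} to each $f_m$ with accuracy $2^{-m-2}$ on consecutive spectral blocks $[2^{N_{m-1}},2^{N_m})$, obtaining sets $E_m$ and polynomials $H_m$, and only \emph{then} defines $\mu$: it sets $\Omega_n=\bigcap_{m\ge n}E_m$, puts $\mu=1$ on $\Omega_{\widetilde n}$ (of measure $>1-\delta$) and $\mu=\mu_n$ on the layers $\Omega_n\setminus\Omega_{n-1}$, where $\mu_n=2^{-p_n(n+2)}\bigl[\prod_{m=1}^n h_m\bigr]^{-1}$ is chosen small relative to the already-computed unweighted $L^p$ norms ($p\le p_m$) of $f_m$ and of the partial sums of $H_m$. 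This is legitimate precisely because the bad sets and the sizes of the $H_m$ are known for the whole countable family before $\mu$ is written down. A general datum $(p_0,n_0,\varepsilon,f)$ is then handled by selecting $f_{m_0}$ with $p_{m_0}>p_0$, $2^{N_{m_0-1}}>2^{n_0}$ and $\|f-f_{m_0}\|_{L^{p_0}[0,1]}<\varepsilon/4$, reusing the pre-built $H_{m_0}$ and padding the coefficients below $2^{N_{m_0-1}}$ with small positive numbers carrying $\delta_k=0$; the approximation step $f\to f_{m_0}$ is the source of the factor $2$ in condition 3). If you want to keep a single-pass argument, you would have to build this countable pre-enumeration into the construction of $\mu$; as written, your proof cannot be completed.
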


\begin{proof}

Let $p_{m}\nearrow+\infty$, $\delta\in(0,1)$, $N_0 = 1$ and 
$
\{f_m(x)\}_{m=1}^{\infty},\ x\in[0,1]\ ,
$
be a sequence of all polynomials in the Walsh system with rational coefficients.
By successively applying lemma \ref{lem3}, one can find sets $E_m\subset [0,1]$ and polynomials in the Walsh system of the form
\begin{equation}
P_{m}(x)=\sum_{k={2^{N_{m-1}}}}^{2^{N_m}-1}a_{k}^{(m)}W_{k}
(x),\label{40}
\end{equation}
\begin{equation}
H_{m}(x)=\sum_{k={2^{N_{m-1}}}}^{2^{N_m}-1}\delta_{k}^{(m)}a_{k}
^{(m)}W_{k}(x),\quad \delta_{k}^{(m)}=\pm 1, 0,\label{41}
\end{equation}
which satisfy the following conditions for any natural number $m$:
\begin{equation}
|E_m|>1-\frac{1}{2^{m+1}},\label{42}
\end{equation}
\begin{equation}
0<a_{k+1}^{(m)}< a_k^{(m)}<\frac{1}{4^{N_{m-1}}}, \quad k\in[2^{N_{m-1}}, 2^{N_m}-1),\label{43}
\end{equation}
\begin{equation}
\left\|f_{m}-H_{m}\right\|_{L^{p_m}(E_m)}<\frac{1}{2^{m+2}},\label{44}
\end{equation}
\begin{equation}
\max_{2^{N_{m-1}}\leq M<{2^{N_m}}}\left\|\sum_{k=2^{N_{m-1}}}^{M}
\delta_{k}^{(m)}a_{k}^{(m)}W_{k}\right\|_{L^p(e)}<\|f_m\|_{L^p(e)}
+\frac{1}{2^{m+2}},\label{45}
\end{equation}
for any measurable set $e\subseteq E_m$ and $p\in[1,p_m]$, and
\begin{equation}
\max_{2^{N_{m-1}}\leq M<2^{N_m}}\left\|\sum_{k=2^{N_{m-1}}}^{M}a_{k}^{(m)}W_{k}\right\|_{L^1[0,1]}<\frac{1}{2^{m+2}}.\label{46}
\end{equation}

We set
\begin{equation}
\begin{cases}
\Omega_{n}={\displaystyle\bigcap_{m=n}^{+\infty}}E_{m}
,\quad n\in\mathbb{N},
\\
E=\Omega_{\widetilde n}={\displaystyle\bigcap_{m=\widetilde n
}^{+\infty}}E_m,\quad \widetilde n=[\log_{1/2}\delta]+1,
\\
B=\Omega_{\widetilde n
}\bigcup\left(\displaystyle{\bigcup_{n=\widetilde n+1}^{+\infty}}\Omega_{n}
\setminus\Omega_{n-1}\right).
\end{cases}
\label{47}
\end{equation}

It is clear (see \eqref{42} and \eqref{47}) that
\begin{equation*}
|B|=1,\quad |E|>1-\delta.
\end{equation*}

We define a function $\mu(x)$ in the following way:
\begin{equation}
\mu(x)=\begin{cases}
1,\quad x\in E\cup([0,1]\setminus B),
\\
\mu_{n},\quad \ x\in\Omega_{n}
\setminus\Omega_{n-1},\ n\geq \widetilde n + 1,
\end{cases}
\label{48}
\end{equation}
where
\begin{equation}
\mu_{n}=\frac{1}{2^{p_{n}(n+2)}}\cdot\left[\prod_{m=1}^{n}
h_{m}\right]  ^{-1},\label{49}
\end{equation}

\begin{equation*}
h_m=\max_{1\leq p\leq p_m}\left(1+\int_{0}^{1}
|f_m(x)|^{p}dx+\max_{2^{N_{m-1}}\leq M<2^{N_m}}\int_{0}^{1}\left|
\sum_{k=2^{N_{m-1}}}^{M}\delta_k^{(m)}a_{k}^{(m)}W_k(x)\right|^{p}dx\right).
\end{equation*}

It follows  from \eqref{47}--\eqref{49} that for all $m\geq \widetilde n$
\begin{equation}
\int_{[0,1]\setminus\Omega_{m}}\left|H_m(x)\right|^{p_m}
\mu(x)dx=\sum_{n=m+1}^{+\infty}\left(  \int_{\Omega_{n}\setminus\Omega_{n-1}
}\left|H_m(x)\right|^{p_m}\mu_{n}dx\right) <\label{50}
\end{equation}
\begin{equation*}
<\sum_{n=m+1}^{\infty}\frac{1}{2^{p_{n}(n+2)}h_m}\left(  \int_{0}^{1}\left|H_{m}(x)\right|^{p_m}dx\right)<\frac{1}{2^{p_m(m+2)}}.
\end{equation*}

In a similar way for all $m\geq \widetilde n$, $M\in[2^{N_{m-1}},2^{N_m})$ and $p\in[1,p_{m}]$ we have
\begin{equation}
\int_{[0,1]\setminus\Omega_{m}}\left| f_{m}(x)\right|^{p_m}
\mu(x)dx<\frac{1}{2^{p_m(m+2)}}\label{51}
\end{equation}
and 
\begin{equation}
\int_{[0,1]\setminus\Omega_{m}}\left|\sum_{k=2^{N_{m-1}}}^{M}\delta_k^{(m)}a_k^{(m)}W_k(x)\right| ^{p}\mu(x)dx< \frac{1}{2^{p(m+2)}}.\label{52}
\end{equation}

Since $\Omega_m\subset E_m$, by using conditions \eqref{44}, \eqref{47}--\eqref{51} and Jensen's inequality, for all $m\geq \widetilde n$ we obtain
\begin{equation*}
\int_{0}^{1}|f_{m}(x)-{H}_{m}(x)|^{p_{m}}\mu(x)dx=\int_{\Omega_{m}}|f_{m}(x)-{H}_{m}(x)|^{p_{m}}\mu(x)dx+
\end{equation*}
\begin{equation*}
+\int_{[0,1]\setminus\Omega_{m}}|f_{m}(x)-{H}_{m}(x)|^{p_{m}}\mu
(x)dx<\frac{1}{2^{p_{m}(m+2)}}+2\cdot2^{p_m}\frac{1}{2^{p_m(m+2)}}<
\frac{1}{2^{p_m(m-1)}},
\end{equation*}
\smallskip
or
\begin{equation}
\|f_{m}-{H}_{m}\|_{L^{p_{m}}_\mu[0,1]}<\frac{1}{2^{m-1}}.\label{53}
\end{equation}

Further, taking relations \eqref{45}, \eqref{47}--\eqref{49}, \eqref{52} and Jensen's inequality into account for all  $M\in[2^{N_{m-1}},2^{N_m})$, $p\in[1,p_{m}]$ and $m\geq \widetilde n+1$ we get
\begin{equation*}
\int_{0}^{1}\left|\sum_{k=2^{N_{m-1}}}^{M}\delta_k^{(m)}a_k^{(m)}W_k(x)\right| ^{p}\mu(x)dx=\int_{\Omega_{m}}\left|\sum_{k=2^{N_{m-1}}}^{M}\delta_k^{(m)}a_k^{(m)}W_k(x)\right|^{p}\mu(x)dx+
\end{equation*}
\begin{equation*}
+\int_{[0,1]\setminus\Omega_{m}}\left|\sum_{k=2^{N_{m-1}}}^{M}\delta_k^{(m)}a_k^{(m)}W_k(x)\right|^{p}\mu(x)dx<
\end{equation*}
\begin{equation*}
<\int_{\Omega_{\widetilde n}}\left|\sum_{k=2^{N_{m-1}}}^{M}\delta_k^{(m)}a_k^{(m)}W_k(x)\right|^{p}\mu(x)dx+
\end{equation*}
\begin{equation*}
+\sum_{n=\widetilde n+1}^{m}\mu_{n}\cdot\int_{\Omega_{n}\setminus\Omega_{n-1}}\left|
\sum_{k=2^{N_{m-1}}}^{M}\delta_k^{(m)}a_k^{(m)}W_k(x)\right|^{p}dx+\frac{1}{2^{p(m+2)}}<
\end{equation*}
\begin{equation*}
<\left(\|f_m\|_{L^p(\Omega_{\widetilde n})}+\frac{1}{2^{m+2}} \right)^p+\sum_{n=\widetilde n+1}^{m}\mu_{n}\left(\|f_m\|_{L^p(\Omega_{n}\setminus\Omega_{n-1})}+\frac{1}{2^{m+2}}\right)^p+\frac{1}{2^{p(m+2)}}\leq
\end{equation*}
\begin{equation*}
\leq2^p\left(\int_{\Omega_{\widetilde n}}|f_{m}(x)|^{p}dx+\sum_{n=\widetilde n+1}^{m}\int_{\Omega_{n} \setminus\Omega_{n-1}}|f_{m}(x)|^{p}\cdot\mu_{n}dx\right)+
\end{equation*}
\begin{equation*}
+\frac{1}{2^{p(m+2)}}\left(2^p+2^p\cdot\sum_{n=\widetilde n+1}^{m}\mu_n+1\right)<2^p\|f_m\|^p_{L^p_\mu[0,1]}+\frac{1}{2^{p(m-1)}}
\end{equation*}
or
\begin{equation}
\left\|\sum_{k=2^{N_{m-1}}}^{M}\delta_k^{(m)}a_k^{(m)}W_k\right\|_{L^p_\mu[0,1]}<2\|f_m\|_{L^p_\mu[0,1]}+\frac{1}{2^{m-1}}.\label{54}
\end{equation}

Let $n_0\in\mathbb{N}$ and $\varepsilon\in(0,1)$ be arbitrarily given. From the sequence $\{f_m(x)\}_{m=1}^{\infty}$ we choose such a function $f_{m_0}(x)$ that 
\begin{equation}
m_0>\max\left\{\tilde n,\ \log_2\frac{8}{\varepsilon}\right\}, \quad p_{m_0}>p_0,\quad 2^{N_{m_0-1}}>2^{n_0},\label{55}
\end{equation}
\begin{equation}
\|f-f_{m_{0}}\|_{L^{p_0}[0,1]}<\frac{\epsilon}{4},\label{56}
\end{equation}
and for $k\in\left[2^{n_0},2^{N_{m_0}}\right)$ set
\begin{equation}
a_k=
\begin{cases}
a_{2^{N_{m_0-1}}}^{(m_0)}+\frac{1}{2^{k+m_0}},& \hbox{when} \quad k\in\left[2^{n_0},2^{N_{m_0-1}}\right)
\\
\\
a_k^{(m_0)},& \hbox{when} \quad k\in\left[2^{N_{m_0-1}},2^{N_{m_0}}\right),
\end{cases}\label{57}
\end{equation}
\begin{equation}
\delta_k=\begin{cases}
0,& \hbox{when} \quad k\in\left[2^{n_0},2^{N_{m_0-1}}\right)
\\
\\
\delta_k^{(m_0)}=\pm1,0,& \hbox{when} \quad k\in\left[2^{N_{m_0-1}},2^{N_{m_0}}\right)
\end{cases}\label{58}
\end{equation}
and
\begin{equation*}
P(x)=\sum_{k=2^{n_0}}^{2^{N_{m_0}}-1}a_kW_k(x)=\sum_{k=2^{n_0}}^{2^{N_{m_0-1}}-1}a_kW_k(x)+P_{m_0}(x),
\end{equation*}
\begin{equation*}
H(x)=\sum_{k=2^{n_0}}^{2^{N_{m_0}}-1}\delta_ka_kW_k(x)=H_{m_0}(x).
\end{equation*}

Now it is not hard to verify that the function $\mu(x)$ and polynomials $P(x)$ and $H(x)$ satisfy all requirements of lemma \ref{lem4}. Indeed, statements 1)--3) immediately follow from \eqref{43}, \eqref{53}--\eqref{58}. Further, by using  \eqref{46}, \eqref{55}--\eqref{57} we obtain
\begin{equation*}
\max_{2^{n_0}\leq M<{2^{N_{m_0}}}}\left\|\sum_{k=2^{n_0}}^Ma_kW_k\right\|_{L^1[0,1]}\leq\max_{2^{n_0}\leq M_1<{2^{N_{m_0-1}}}}\left\|\sum_{k=2^{n_0}}^{M_1}a_kW_k\right\|_{L^1[0,1]}+
\end{equation*} 
\begin{equation*}
+\max_{{2^{N_{m_0-1}}}\leq M_2<{2^{N_{m_0}}}}\left\|\sum_{k={2^{N_{m_0-1}}}}^{M_2}a_k^{(m_0)}W_k\right\|_{L^1[0,1]}<\max_{2^{n_0}\leq M_1<{2^{N_{m_0-1}}}}\left\|\sum_{k=2^{n_0}}^{M_1}a_kW_k\right\|_{L^1[0,1]}+\frac{\varepsilon}{2}.
\end{equation*}

Let $M_1$ be an arbitrary natural number from $\left[2^{n_0},2^{N_{m_0-1}}\right)$. Then $M_1\in [2^{n_1},2^{n_1+1})$ for some $n_1\in[n_0,N_{m_0-1})$ and, considering \eqref{1.a}, we have 
\begin{equation*}
\left\|\sum_{k=2^{n_0}}^{M_1}a_kW_k\right\|_{L^1[0,1]}< a_{2^{N_{m_0-1}}}^{(m_0)}\cdot\left\|\sum_{k=2^{n_0}}^{2^{n_1}-1}W_k\right\|_{L^1[0,1]}+a_{2^{N_{m_0-1}}}^{(m_0)}\cdot2^{n_1}+
\end{equation*}
\begin{equation*}
+\sum_{k=2^{n_0}}^{M_1}\frac{1}{2^{k+m_0}}<\frac{\varepsilon}{2},
\end{equation*}
which proves the statement 4). 

Lemma \ref{lem4} is proved.

\end{proof}

\section{Proof of  theorem \ref{main}}

Let $\delta\in(0,1)$, $p_{m}\nearrow+\infty$ and $\{f_{m}(x)\}_{m=1}^{\infty},\ x\in[0,1],
$ be a sequence of all polynomials in the Walsh system with rational coefficients. By applying Lemma \ref{lem4}, we obtain a weight function  $0<\mu(x)\leq1$ with $|\{x\in[0,1],\ \mu(x)=1\}|>1-\delta$ and polynomials in the Walsh systems 
\begin{equation}
P_{m}(x)=\sum_{k={N_{m-1}}}^{{N_{m}^{{}}}-1}a_{k}^{(m)}W_{k}
(x),\label{60}
\end{equation}
\begin{equation}
H_{m}(x)=\sum_{k={N_{m-1}}}^{{N_{m}-1}}\delta_{k}^{(m)}a_{k}^{(m)}W_{k}(x),\quad \delta_{k}^{(m)}=\pm 1, 0, \label{61}
\end{equation}
which satisfy the following conditions for any natural number $m$:
\begin{equation}
\begin{cases}
0<a_{k+1}^{(1)}<a_{k}^{(1)},
\\
\\
0<a_{k+1}^{(m)}<a_{k}^{(m)}<\min\bigl\{2^{-m}, a_{{N_{m - 1}} - 1}^{(m-1)}\bigr\} & \hbox{for}\quad m>1,
\end{cases}\label{62}
\end{equation}
when $ k\in[N_{m-1},N_m-1)$,
\begin{equation}
\left\|f_{m}-H_{m}\right\|_{L^{p_m}_\mu[0,1]}<2^{-m-1},\label{63}
\end{equation}
\begin{equation}
\max_{N_{m-1}\leq M<{N_m}}\left\|\sum_{k={N_{m-1}}}^{M}
\delta_{k}^{(m)}a_{k}^{(m)}W_{k}\right\|_{L^p_\mu[0,1]}<2\|f_m\|_{L^p_\mu[0,1]}
+2^{-m},\label{64}
\end{equation}
for any $p\in [1,p_m]$, and
\begin{equation}
\max_{N_{m-1}\leq M<N_m}\left\|\sum_{k=N_{m-1}}^{M}a_{k}^{(m)}W_{k}\right\|_{L^1[0,1]}<2^{-m-1}.\label{65}
\end{equation}
From \eqref{60} and \eqref{65} it immediately follows that
\begin{equation}
\left\|\sum_{m=1}^{\infty}P_{m}\right\|_{L^1[0,1]}\leq\sum_{m=1}^{\infty}\left\|P_m\right\|_{L^1[0,1]}<+\infty.\label{66}
\end{equation}

By denoting
\begin{equation}
P_0(x)=\sum_{k=0}^{N_0-1}a_{k}W_{k}(x),\label{67}
\end{equation}
where coefficients $a_k,\ k\in[0,N_0)$, are arbitrary monotonically decreasing positive numbers with $a_{N_0-1}>a_{N_0}^{(1)}$, we define a function $g(x)$ and a series $\sum_{k=0}^{\infty}a_kW_k(x)$ as follows:
\begin{equation}
g(x)=\sum_{m=0}^{\infty}P_{m}(x),\label{68}
\end{equation}
\begin{equation}
a_{k}=a_{k}^{(m)},\ \mbox{when} \ k\in[N_{m-1}, N_m),\ m\in\mathbb{N},\label{69}
\end{equation}
and $a_k$ are coefficients in $P_0(x)$ (see \eqref{67}), when $k\in[0,N_0)$. 
By using \eqref{62}, \eqref{65}--\eqref{69} we conclude that the series $\sum_{k=0}^{\infty}a_kW_k(x)$ converges to $g\in L^1[0,1]$ in $L^1[0,1]$ metric, and $a_{k}=\int_{0}^{1}g(t)W_{k}(t)dt \searrow0$.

Let $p\geq1$ and let $f\in L_{\mu}^{p}(0,1)$. We choose such a polynomial $f_{\nu_{1}}(x)$ from the sequence $\{f_{m}(x)\}_{m=1}^{\infty}$ that
\begin{equation}
\left\|f-f_{\nu_{1}}\right\|_{L^p_\mu[0,1]}<2^{-2}\quad\hbox{and} \quad p_{\nu_{1}}>p.\label{70}
\end{equation}

By denoting  
\begin{equation*}
\delta_{k}= 
\begin{cases}
\delta_{k}^{({\nu_{1}})}=\pm 1,0,&  \hbox{when}\quad k\in[N_{{\nu}_1-1},N_{\nu_1}),
\\
0, &  \hbox{when}\quad k\in[0,N_{{\nu}_1-1}),
\end{cases}
\end{equation*}
and taking into account \eqref{61}, \eqref{63}, \eqref{64} and \eqref{70}, we have
\begin{equation*}
\left\|f-\sum_{k=0}^{N_{\nu_1}-1}\delta_{k}a_{k}
W_{k}\right\|_{L^p_\mu[0,1]}\leq\left\|f-f_{\nu_{1}}\right\|_{L^p_\mu[0,1]}+\left\|f_{\nu_1}-H_{\nu_1}\right\|_{L^{p_{\nu_1}}_\mu[0,1]}<
\end{equation*}
\begin{equation*}
<2^{-2}+2^{-\nu_1-1}<2^{-1},
\end{equation*}
and
\begin{equation*}
\max_{N_{{\nu}_1-1}\leq M<N_{{\nu}_1}}\left\|\sum_{k=N_{\nu_1-1}}^{M}\delta_{k}a_{k}
W_{k}\right\|_{L^p_\mu[0,1]}<2\|f_{\nu_{1}}\|_{L^p_\mu[0,1]}+2^{-\nu_1}.
\end{equation*}

Assume that for $q>1$ numbers $\nu_{1}<\nu_{2}<\dots<\nu_{q-1}$ and $\{\delta_k=\pm 1, 0\}_{k=0}^{N_{\nu_{q-1}}-1}$ are already chosen, so that for each natural number $j\in[1,q-1]$ the following conditions hold:
\begin{equation*}
\delta_{k}= 
\begin{cases}
\delta_{k}^{({\nu_{j}})}=\pm 1, 0,&  \hbox{when}\quad k\in[N_{{\nu}_j-1},N_{\nu_j}),
\\
0, &  \hbox{when}\quad k\notin\bigcup_{j=1}^{q-1}[N_{{\nu}_j-1},N_{\nu_j}),
\end{cases}
\end{equation*}
\begin{equation}
\left\|f-\sum_{k=0}^{N_{\nu_j}-1}\delta_{k}a_{k}
W_{k}\right\|_{L^p_\mu[0,1]}<2^{-j},\label{71}
\end{equation}
\begin{equation*}
\max_{N_{{\nu}_j-1}\leq M<N_{{\nu}_j}}\left\|\sum_{k=N_{\nu_j-1}}^{M}\delta_ka_k
W_{k}\right\|_{L^p_\mu[0,1]}<2\|f_{\nu_{j}}\|_{L^p_\mu[0,1]}+2^{-\nu_j}.
\end{equation*}

We choose a function $f_{\nu_{q}}(x)$ from the sequence $\{f_{m}(x)\}_{m=1}^{\infty}$ with $\nu_{q}>\nu_{q-1}$ so that
\begin{equation}
\left\|f-\sum_{k=0}^{N_{\nu_{q-1}}-1}\delta_{k}a_{k}
W_{k}(x)-f_{\nu_q}\right\|_{L^p_\mu[0,1]}<2^{-q-1},\label{72}
\end{equation}
and define
\begin{equation}
\delta_{k}= 
\begin{cases}
\delta_{k}^{({\nu_{q}})}=\pm 1, 0,&  \hbox{when}\quad k\in\bigr[N_{{\nu}_q-1},N_{\nu_q}\bigr),
\\
0, &  \hbox{when}\quad  k\notin\bigcup_{j=1}^{q}[N_{{\nu}_j-1},N_{\nu_j}).
\end{cases}\label{73}
\end{equation}

Taking into account \eqref{61}, \eqref{63}, \eqref{72} and \eqref{73}, we get
\begin{equation}
\left\|f-\sum_{k=0}^{N_{\nu_q}-1}\delta_{k}a_{k}
W_{k}\right\|_{L^p_\mu[0,1]}\leq\label{74}
\end{equation}
\begin{equation*}
\leq\left\|f-\sum_{k=0}^{N_{\nu_{q-1}}-1}\delta_{k}a_{k}
W_{k}-f_{\nu_q}\right\|_{L^p_\mu[0,1]}+\left\|f_{\nu_q}-H_{\nu_q}\right\|_{L^{p_{\nu_q}}_\mu[0,1]}<
\end{equation*}
\begin{equation*}
<2^{-q-1}+2^{-\nu_q-1}< 2^{-q}.
\end{equation*}

Further, from \eqref{71} and \eqref{72} we have
\begin{equation*}
\|f_{\nu_{q}}\|_{L^p_\mu[0,1]}<\left\|f-\sum_{k=0}^{N_{\nu_{q-1}}-1}\delta_{k}a_{k}
W_{k}-f_{\nu_q}\right\|_{L^p_\mu[0,1]}+
\end{equation*}
\begin{equation*}
+\left\|f-\sum_{k=0}^{N_{\nu_{q-1}}-1}\delta_{k}a_{k}
W_{k}\right\|_{L^p_\mu[0,1]}<2^{-q-1} + 2^{-q+1}<2^{-q+2}.
\end{equation*}
Thus, from \eqref{64} and \eqref{73} it follows that for each natural number $M\in[N_{{\nu}_q-1},N_{{\nu}_q})$
\begin{equation}
\left\|\sum_{k=N_{{\nu}_q-1}}^{M}\delta_ka_k
W_{k}\right\|_{L^p_\mu[0,1]}<2\|f_{\nu_{q}}\|_{L^p_\mu[0,1]}+2^{-\nu_q}<2^{-q+4}.\label{75}
\end{equation}

Clearly, by using induction one can determine growing sequence of indexes $\{\nu_q\}_{q=1}^{+\infty}$ and numbers $\{\delta_k=\pm 1,0\}_{k=0}^{+\infty}$ so that conditions \eqref{73}--\eqref{75} hold for any $q\in\mathbb{N}$. Hence,  we obtain a series
\begin{equation}
\sum_{k=0}^{+\infty}\delta_{k}a_{k}W_{k}(x),\quad \delta_k=\pm 1, 0, \label{76}
\end{equation}
which converges to $f$ in $L^p_\mu[0,1]$ metric. Indeed, from \eqref{74} it follows that the subsequence $\{S_{N_{\nu_q}}(x)\}_{q=1}^{+\infty}$ of its partial sums
\begin{equation*}
S_N(x)\equiv\sum_{k=0}^{N-1}\delta_{k}a_{k}W_{k}(x),\quad N=1,2,\dots ,
\end{equation*}
converges to $f$ in $L^p_\mu[0,1]$ metric, and \eqref{75} provides the convergence of the whole sequence $S_N(x)$.

The theorem \ref{main} is proved.

\bibliographystyle{amsplain}

\end{document}